 \newtheorem{thm}{Theorem}[section]
 \newtheorem{prop}[thm]{Proposition}
 \theoremstyle{definition}
 \theoremstyle{remark}
 \newtheorem{rem}[thm]{Remark}
 \numberwithin{equation}{section}
\begin{document}

%
%
%
%
%
%
%
%
%

\title[Scattering Matrices for Close Singular Selfadjoint Perturbations]
 {Scattering Matrices for Close Singular Selfadjoint Perturbations of Unbounded Selfadjoint Operators}
\author[Adamyan]{Vadym Adamyan }

\address{%
 Department  of Theoretical Physics and Astronomy\\
Odessa National I.I. Mechnikov  University\\
65082 Odessa\\ 
Ukraine}

\email{vadamyan@onu.edu.ua}

\thanks{This paper is a part of the ongoing research project 0120U104119 of the Department of Theoretical Physics and Astronomy of Odessa I.I. Mechnikov National University.} 

\subjclass{Primary 99Z99; Secondary 00A00}

\keywords{Unbounded selfadjoint operators,singular pertubations, scattering matrices, Laplace operator, zero-range potentials}

\date{}
\dedicatory{}

\begin{abstract}
In this paper, we consider an unbounded selfadjoint operator $A$ and its selfadjoint perturbations in the same Hilbert space $\mathcal{H}$.  As in \cite{AK}, we call a selfadjoint operator $A_{1}$ the singular perturbation of $A$ if $A_{1}$ and {A} have different domains $\mathcal{D}(A),\mathcal{D}(A_{1})$ but $A=A_{1}$ on $\mathcal{D}(A)\cap\mathcal{D}(A_{1})$. Assuming that $A$ has absolutely continuous spectrum and the difference of resolvents $R_{z}(A_{1}) -R_{z}(A)$ of  $A_{1}$ and $A$ for non-real $z$ is a trace class operator we find the explicit expression for the scattering matrix for the pair $A, A_{1}$  through the constituent elements of the Krein formula for the resolvents of this pair. As an illustration, we find the scattering matrix for the standardly defined Laplace operator in $L_{2}\left(\mathbf{R}_{3}\right)$ and its singular perturbation in the form of an infinite sum of zero-range potentials. 
\end{abstract}

\maketitle
\section{Introduction}
Let $A,A_{1}$ be unbounded selfadjoint operators in Hilbert space $\mathcal{H}$, $\mathcal{D},\mathcal{D}_{1}$ and $R(z),R_{1}(z), \, \mathrm{Im}z\neq 0,$ are the domains and resolvents of $A,A_{1}$, respectively. Following \cite{AK} we call $A_{1}$ a \textit{singular perturbation} of $A$ if
 $$ \begin{array}{cc} 
1)\,  \mathcal{D}\cap\mathcal{D}_{1} \, \text{is dense in }  \mathcal{H}; &
2)\, A_{1}=A \, \text{on } \mathcal{D} \cap \mathcal{D}_{1}. \end{array} 	
$$ 
In accordance with this definition $A,A_{1}$ are selfadjoint extensions of the same densely defined symmetric operator
$$ B= A|_{\mathcal{D}\cap\mathcal{D}_{1}}=A_{1}|_{\mathcal{D}\cap\mathcal{D}_{1}} . $$
Therefore, the specific properties of the operator $A_{1}$ as a perturbation of the given operator $A$ can be investigated in the framework of the extension theory  of symmetric operators. However, such a way can be long and involved, since in this case, it is not possible to directly operate with the difference of the singular perturbation $A_{1}$ of $A$ and $A$. Starting from the known properties of the operator $A$, the spectral analysis of the perturbation $A_{1}$ and the scattering theory for the pair $A, A_{1}$ can be developed bypassing the constructions of extension theory by operating directly and exclusively with the resolvent of  $A$ and the constituent elements of  Krein's formula \cite{Kr} for the difference of resolvents $R(z)-R_{1}(z)$. One can do this for a fairly wide class of unbounded selfadjoint operators of mathematical physics and their singular perturbations based on the following version of Krein's formula \cite{Ad}, which combines all the characteristic properties of the resolvents of such perturbations. 
\begin{thm}\label{kreinnext1}
	Let $\mathcal{H}$ and $\mathcal{K}$ be Hilbert spaces, $A$ be an unbounded selfadjoint operator in $\mathcal{H}$ and  $R(z),\; \mathrm{Im}z\neq 0,$ is the resolvent of $A$, $G(z)$  is a bounded holomorphic in the open upper and lower half-planes operator function from $\mathcal{K}$ to  $\mathcal{H}$ satisfying the conditions 
	\begin{itemize}
		\item{for any non-real $z,z_{0}$
			\begin{equation}\label{res3}
			G(z)=G(z_{0})+(z-z_{0})R(z)G(z_{0}), 
			\end{equation}
		}
		\item{ zero is not an eigenvalue of the operator  $G(z)^{*}G(z)$ at least for one and hence for all non-real $z$ and the intersection of the domain $\mathcal{D}(A)$ of $A$ and the subspace $\mathcal{N}=\overline{G(z_{0})\mathcal{K}}\subset\mathcal{H}$ consists only of the zero-vector.}
	\end{itemize}
	Let $Q(z)$ be a holomorphic in the open upper and lower half-planes operator function in $\mathcal{K}$ such that 
	\begin{itemize}
		\item{$Q(z)^{*}=Q(\bar{z}), \; \mathrm{z}\neq 0$;}
		\item{for any non-real $z,z_{0}$
			\begin{equation}
			Q(z)-Q(z_{0})=(z-z_{0})G(\bar{z_{0}})^{*}G(z). 
			\end{equation}
		}
	\end{itemize} 
	Then for any selfadjoint operator $L$ in $\mathcal{K}$ such that the operator $Q(z)+L, \:\mathrm{Im}z\neq 0, $ is boundedly invertible the operator function
	\begin{equation}\label{krein5}
	R_{L}(z)=R(z)-G(z)\left[Q(z)+L \right]^{-1} G(\bar{z})^{*}
	\end{equation}
	is the resolvent of some singular selfadjoint perturbation  $A_{1}$ of $A$. 
\end{thm}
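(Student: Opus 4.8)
The plan is to verify that $R_{L}(z)$, given by (\ref{krein5}) for every non-real $z$ as a bounded operator built from $R(z)$, $G(z)$, $G(\bar z)^{*}$ and $M(z):=[Q(z)+L]^{-1}$, is a \emph{pseudo-resolvent} with trivial kernel and dense range; the standard pseudo-resolvent criterion then produces a closed densely defined $A_{1}$ with $R_{L}(z)=(A_{1}-z)^{-1}$ and resolvent set containing all non-real $z$, the accompanying symmetry relation $R_{L}(z)^{*}=R_{L}(\bar z)$ makes $A_{1}$ selfadjoint, and an explicit description of $\mathcal{D}(A)\cap\mathcal{D}(A_{1})$ shows $A_{1}$ is a singular perturbation of $A$. \emph{Step (i) (resolvent identity and symmetry).} From (\ref{res3}), from the functional equation $Q(z)-Q(z_{0})=(z-z_{0})G(\bar z_{0})^{*}G(z)$, and from adjunction together with $R(\bar z)^{*}=R(z)$ (selfadjointness of $A$), one extracts
\[
(z-w)R(z)G(w)=G(z)-G(w),\qquad (z-w)G(\bar z)^{*}R(w)=G(\bar z)^{*}-G(\bar w)^{*},
\]
\[
(z-w)G(\bar z)^{*}G(w)=Q(z)-Q(w)=M(z)^{-1}-M(w)^{-1}.
\]
Using $(z-w)R(z)R(w)=R(z)-R(w)$ and substituting these three identities into the expansion of $(z-w)R_{L}(z)R_{L}(w)$, every product containing $R(z)$ or $R(w)$ becomes a difference of $G$'s or $G^{*}$'s, the inverse factors $M(z)^{-1},M(w)^{-1}$ cancel against neighbouring copies of $M$, and after collecting terms what remains is exactly $R(z)-R(w)-G(z)M(z)G(\bar z)^{*}+G(w)M(w)G(\bar w)^{*}=R_{L}(z)-R_{L}(w)$. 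The symmetry relation follows from $M(z)^{*}=[Q(z)^{*}+L]^{-1}=[Q(\bar z)+L]^{-1}=M(\bar z)$.

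\emph{Step (ii) (injectivity and dense range).} I would first record the reusable fact $G(z)\mathcal{K}\cap\mathcal{D}(A)=\{0\}$: if $G(z)\varphi\in\mathcal{D}(A)$ then by (\ref{res3}) $G(z_{0})\varphi=G(z)\varphi-(z-z_{0})R(z)G(z_{0})\varphi\in\mathcal{D}(A)$ as well, and since $G(z_{0})\varphi\in\mathcal{N}$ the hypothesis $\mathcal{D}(A)\cap\mathcal{N}=\{0\}$ gives $G(z_{0})\varphi=0$, hence $G(z)\varphi=0$, hence $\varphi=0$ because $0$ is not an eigenvalue of $G(z)^{*}G(z)$. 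Now if $R_{L}(z)f=0$ then $R(z)f=G(z)\varphi$ with $\varphi=M(z)G(\bar z)^{*}f$; the left-hand side lies in $\mathcal{D}(A)$, so $\varphi=0$, whence $R(z)f=0$ and $f=0$. Thus $\mathrm{ker}\,R_{L}(z)=\{0\}$ for all non-real $z$, and applying this with $\bar z$ in place of $z$ gives $\overline{\mathrm{ran}\,R_{L}(z)}=(\mathrm{ker}\,R_{L}(z)^{*})^{\perp}=(\mathrm{ker}\,R_{L}(\bar z))^{\perp}=\mathcal{H}$. Hence $R_{L}(z)=(A_{1}-z)^{-1}$ for a closed densely defined $A_{1}$, and $(A_{1}^{*}-\bar z)^{-1}=R_{L}(z)^{*}=R_{L}(\bar z)=(A_{1}-\bar z)^{-1}$ yields $A_{1}=A_{1}^{*}$.

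\emph{Step (iii) (singular-perturbation property).} Fix a non-real $z$ and write a generic element of $\mathcal{D}(A_{1})$ as $u=R_{L}(z)f=R(z)f-G(z)M(z)G(\bar z)^{*}f$. Since $R(z)f\in\mathcal{D}(A)$, we have $u\in\mathcal{D}(A)$ iff $G(z)M(z)G(\bar z)^{*}f\in\mathcal{D}(A)$, which by the fact in Step (ii) holds iff $M(z)G(\bar z)^{*}f=0$, i.e.\ (as $M(z)$ is injective) iff $G(\bar z)^{*}f=0$; in that case $u=R(z)f$ and $A_{1}u=f+zR_{L}(z)f=f+zR(z)f=Au$. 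Therefore $\mathcal{D}(A)\cap\mathcal{D}(A_{1})=R(z)\bigl(\overline{G(\bar z)\mathcal{K}}\bigr)^{\perp}$ and $A_{1}=A$ on this set. For density, let $g$ be orthogonal to it; then $R(\bar z)g=R(z)^{*}g\in\overline{G(\bar z)\mathcal{K}}$. By (\ref{res3}), $G(\bar z)=(A-z_{0})R(\bar z)G(z_{0})$, where $(A-z_{0})R(\bar z)=I+(\bar z-z_{0})R(\bar z)$ is a bounded bijection of $\mathcal{H}$ carrying $\mathcal{N}=\overline{G(z_{0})\mathcal{K}}$ onto $\overline{G(\bar z)\mathcal{K}}$, with inverse $(A-\bar z)R(z_{0})=I+(z_{0}-\bar z)R(z_{0})$ that preserves $\mathcal{D}(A)$; hence $(A-\bar z)R(z_{0})R(\bar z)g\in\mathcal{D}(A)\cap\mathcal{N}=\{0\}$, so $R(\bar z)g=0$ and $g=0$. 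This gives conditions 1)--2) for $A_{1}$.

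I expect Step (i) to be the longest, purely computational part. The genuinely delicate point, however, is the density argument in Step (iii): the hypothesis supplies $\mathcal{D}(A)\cap\mathcal{N}=\{0\}$ only for the one subspace $\mathcal{N}=\overline{G(z_{0})\mathcal{K}}$, and one must upgrade it --- via (\ref{res3}) and $R(\bar z)^{*}=R(z)$ --- to the analogous triviality for the resolvent-transported subspace $\overline{G(\bar z)\mathcal{K}}$ that actually governs $\mathcal{D}(A)\cap\mathcal{D}(A_{1})$.
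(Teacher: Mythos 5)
Your argument is correct and, at its core, takes the same route as the paper's (deliberately condensed) proof: check for $R_{L}(z)$ the three conditions of the resolvent criterion --- the symmetry $R_{L}(z)^{*}=R_{L}(\bar z)$, the Hilbert identity, and $\ker R_{L}(z)=\{0\}$ --- and then invoke the standard correspondence between such operator functions and selfadjoint operators; your Step (i) is exactly the computation the paper declares to ``directly follow''. The differences are ones of completeness rather than of method, and they are worth recording. Where the paper settles the kernel condition by asserting ``by assumption $R(z)\mathcal{H}\cap G(z)\mathcal{K}=\{0\}$'', the hypothesis in fact only gives $\mathcal{D}(A)\cap\overline{G(z_{0})\mathcal{K}}=\{0\}$ for the chosen $z_{0}$; your lemma at the start of Step (ii), transporting this triviality to $G(z)\mathcal{K}$ for every non-real $z$ via (\ref{res3}) and the non-degeneracy of $G(z)^{*}G(z)$, supplies precisely the justification the paper leaves implicit. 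Moreover, the paper's sketch stops once $R_{L}$ is identified as the resolvent of a selfadjoint operator, whereas your Step (iii) --- the identification $\mathcal{D}(A)\cap\mathcal{D}(A_{1})=R(z)\bigl(\overline{G(\bar z)\mathcal{K}}\bigr)^{\perp}$, the equality $A_{1}=A$ there, and the density argument using the bounded bijection $I+(\bar z-z_{0})R(\bar z)$ that carries $\mathcal{N}$ onto $\overline{G(\bar z)\mathcal{K}}$ while its inverse preserves $\mathcal{D}(A)$ --- actually verifies the ``singular perturbation'' clause of the statement, which the paper defers to \cite{Ad}. All steps check out, so your version can be read as a self-contained proof of the theorem rather than a sketch.
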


A singular perturbation $A_{1}$ of a given selfadjoint operator $A$ is  hereinafter referred to as \textit{close} (to {A}) if the difference of the resolvents $R_{1}(z)-R(z), \, \mathrm{Im}z\neq 0,$ is a trace class operator. In accordance with this definition, the absolutely continuous components of the close perturbation  and the operator $A$ are similar and absolutely continuous components of $A_{1}$ can be described using the constructions of scattering theory \cite{Kat}. 

The aim of this study is \begin{itemize}
	\item {to derive  an explicit expression relating the scattering matrix for an unbounded selfadjoint operator $A$ with absolutely continuous spectrum and its close singular perturbation $A_{1}$ with operator $L$ and the function $Q(z)$ in the Krein's formula for the same pair $A,A_{1}$, slightly modifying the approach developed in \cite{AdP};} 
	\item {to illustrate the corresponding results by the example of singular selfadjoint perturbations of the Laplace operator $$A=-\Delta=-\frac{\partial^{2}}{\partial x^{2}_{1}}-\frac{\partial^{2}}{\partial x^{2}_{2}}-\frac{\partial^{2}}{\partial x^{2}_{3}} $$  in $\mathbf{L}_{2}(\mathbf{R}_{3})$  defined on the Sobolev subspaces ${H}_{2}^{2}\left( \mathbf{R}_{3}\right)$.}
\end{itemize}

In Section 2, for completeness, we give a sketch of the proof of Theorem \ref{kreinnext1} and indicate  additional conditions under which the difference of  resolvents $R_{1}(z)-R(z)$ of a singular perturbation $A_{1}$ of an unbounded self-adjoint operator $A$ and $A$ itself is a trace class operator, that is the conditions under which the singular perturbation  $A_{1}$ and the unperturbed operator $A$ are close. Further, these conditions are refined for singular perturbations of the Laplace operator in in $\mathbf{L}_{2}(\mathbf{R}_{3})$.

In Section 3, which is some modified version of the paper \cite{AdP}, we calculate the scattering operator for a self-adjoint operator $A$ with an absolutely continuous spectrum and its close singular perturbation $A_{1}$ and find  an explicit expression of the scattering matrix for the pair $A_{1},A$ through the entries in the Krein formula (\ref{krein5}). 
 
 Section 4 is devoted to the scattering theory for self-adjoint singular perturbations of the standardly defined Laplace operator $A$ in $\mathbf{L}_{2}(\mathbf{R}_{3})$ formally given as infinite sum of zero-range potentials. If  such a perturbation $A_{1}$ and $A$ are close, then using the results of previous Section we find an explicit expression for the scattering matrix for this pair. 

  \section{Resolvents of singular selfadjoint perturbations of Laplace operator}
To make the paper self-contained we outline the proof of Theorem \ref{kreinnext1}. 

Recall the folowing well-known criterium (see, for examle \cite{Ad}):\textit{ a holomorphic function $R(z)$ on the open upper and lower half-plains of the complex plane whose values are bounded linear operators in Hilbert space $\mathcal{H}$ is the resolvent of a selfadjoint operator $A$ in $\mathcal{H}$ if and only if 
 \begin{itemize}
	\item{\begin{equation}\label{first1}
		\ker R(z)=\{0\};
		\end{equation}}
	\item{\begin{equation}\label{first}
		R(z)^{*}=R(\overline{z});
		\end{equation}}
	\item{for any non-real $z_{1},z_{2}$ the Hilbert identity 
		\begin{equation}\label{hilb1}
		R(z_{1})-R(z_{2})=(z_{1}-z_{2})R(z_{1})R(z_{2})
		\end{equation} holds.}
	 \end{itemize}
 }

So to verify the validity of Theorem 1.1, it is only necessary to check up that its assumptions about $R_{L}(z)$ guarantee the fulfillment of the conditions (\ref{first1}) - (\ref{hilb1}). 
But relation (\ref{first}) and identity (\ref{hilb1}) for $R_{L}(z)$ directly follow from the fact that $R(z)$ is the resolvent of the self-adjoint operator and the properties that the operator functions $G(z)$ and $Q(z)$ possesses according to the assumptions of Theorem \ref{kreinnext1}. 

To see that (\ref{first1}) is true for $R_{L}(z)$ suppose that there is a vector $h\in \mathcal{H}$  such that $R_{L}(z)h=0$ for some non-real $z$. By (\ref{krein5}) this means that
\begin{equation}\label{krein5a}
R(z)h=G(z)\left[Q(z)+L \right]^{-1} G(\bar{z})^{*}h.
\end{equation}
But by assumption, $R(z)\mathcal{H}\cap G(z)\mathcal{K}=\{0\}$ for non-real $z$. Hence $R(z)h=0$. Given that $R(z)$ is the resolvent of a self-adjoint operator, we conclude from this that $h=0$. Thus, the statement of Theorem \ref{kreinnext1} is true. 

Turning to the description of singular perturbations of the Laplace operator $A=-\Delta$ in $\mathbf{L}_{2}(\mathbf{R}_{3})$ remind that the resolvent $R(z)$ of is the integral operator
\begin{equation}\label{lapl} \begin{array}{c}
\left(R(z)f\right)(\mathbf{x})=\frac{1}{4\pi}
\int_{\mathbf{R}_{3}}\frac{e^{i\sqrt{z}|\mathbf{x}-\mathbf{x}^\prime|}}{|\mathbf{x}-\mathbf{x}^\prime|}f\left(\mathbf{x}^\prime \right)d\mathbf{x}^\prime , \quad \mathrm{Im}{\sqrt{z}}>0, \\ \mathbf{x}=\left(x_{1},x_{2},x_{3} \right) , 
f(\cdot)\in {\mathbf{L}_{2}(\mathbf{R}_{3})}.
\end{array}
\end{equation} 
In the simplest case, when the support of the singular perturbation of the Laplace operator is a finite set of points $\mathbf{x}_{1},...,\mathbf{x}_{N}$ of $\mathbf{R}_{3}$  the $N$-dimensional space $\mathbf{C}_{N}$ may be taken as an auxiliary Hilbert space $\mathcal{K}$ in Krein's formula (\ref{krein5}) for the perturbed resolvent $R_{L}(z)$.  In this case, the linear mappings $G(z)$ of $\mathbf{C}_{N}$ into $\mathbf{L}_{2}(\mathbf{R}_{3}$ , which transforms vectors of the canonical orthonormal basis in $\mathbf{C}_{N}$: $$
\mathbf{e}_{1}=\left( \begin{array}{c} 1 \\ 0 \\ ...\\ 0 \end{array}\right), \dots ,\mathbf{e}_{N}=\left( \begin{array}{c} 0 \\  ... \\0 \\1 \end{array}\right),
$$ respectively, into the functions
\begin{equation}\label{simp1}
\left( G(z)\mathbf{e}_{n}\right)(\mathbf{x})=g_{n}(z;\mathbf{x})= R(z)\delta(\cdot -\mathbf{x}_{n})(\mathbf{x})=\frac{1}{4\pi}\frac{e^{i\sqrt{z}|\mathbf{x}-\mathbf{x}_{n}|}}{|\mathbf{x}-\mathbf{x}_{n}|}, \quad 1\leq n\leq N,
\end{equation}
can be substituted as $G(z)$ into (\ref{krein5}), while, the matrix function 
\begin{equation}\label{simp2}
Q(z)=\left(q_{mn}(z) \right)_{m,n=1}^{N}=\left\lbrace \begin{array}{c} 
q_{mn}(z)=g_{n}(z;\mathbf{x}_{m}-\mathbf{x}_{n}), \quad m\neq n,\\
q_{mm}(z)=\frac{i\sqrt{z}}{4\pi}
\end{array}\right. . 
\end{equation}
may acts as $Q$-function in (\ref{krein5}). As a result, for any invertible Hermitian matrix $L=\left( w_{mn} \right)_{m,n=1}^{N} $ the arising operator function $R_{L}(z)$ satisfies all the conditions of Theorem \ref{kreinnext1} and hence appears to be the resolvent of a singular selfadjoint perturbation of the Laplace operator $A_{L}$ . 

Setting  $\mathcal{N}=G(z_{0})\mathbf{C}_{N},\; \mathrm{Im}z_{0}\neq 0,$ one can easily deduce from the Krein formula for $R_{L}(z)$ that in this case $A_{L}$ is nothing else than the Laplace differential operator $-\Delta$ with the domain \cite{Ad} 
\begin{equation}\label{simp3a}
\begin{array}{c}      
\mathcal{D}_{L}:=  \left\lbrace f: \, f=f_{0}+g, \, f_{0}\in {H}_{2}^{2}\left(\mathbf{R}_{3}\right), g\in\mathcal{N}, \right. \\  
\underset{\rho_{m}\rightarrow 0}{\lim}\left[\frac {\partial} {\partial\rho_{m}}\left(\rho_{m}f(\mathbf{x})\right)\right]+\sum\limits_{n=1}^{N}4\pi w_{mn} 
\underset {\rho_{n}\rightarrow 0}{\lim}\,[\rho_{n}\,f(\mathbf{x})] =0, \\ \left. \rho_{n}=|\mathbf{x}-\mathbf{x}_{n}|, \quad 1\leq n\leq N  \right\rbrace .  \end{array} 
\end{equation}
 \begin{rem}\label{deltacomb}
The boundary conditions (\ref{simp3a}) correspond to singular perturbation $\hat{V}_{L}$, which is formally defined as an operator acting on any continuous function  $f(\cdot)\in \mathbf{L}_{2}(\mathbf{R}_{3})$ by the formula
\begin{equation}\label{deltas}
\left(\hat{V}_{L}f\right)(\mathbf{x})=\sum\limits_{m,n=1}^{N}v_{mn}\delta\left( \mathbf{x}-\mathbf{x}_{m}\right)  
f(\mathbf{x}_{n}), \, \left( v_{mn}\right)_{m,n=1}^{N}=L^{-1}.
\end{equation}	            
If the matrix $L$ is diagonal, then by (\ref{deltas}) we are dealing with a perturbation in the form of a finite sum of zero-range potentials \cite{BF, DeOst} and the corresponding boundary conditions (\ref{simp3a}) legalize the formal expression $$ A_{L}=-\Delta+\sum\limits_{n=1}^{N}w_{n}^{-1}\delta\left( \mathbf{x}-\mathbf{x}_{n}\right).$$
 \end{rem} 
 
 In what follows, it is assumed that the elements of canonical basis $\left\lbrace   \mathbf{e}_{n}=\left(\delta_{nm} \right)_{m\in \mathbb{Z}}\right\rbrace $ of the Hilbert space $\mathbf{l}_{2}$ belong to the domains of all mentioned operators in this space, and no distinction is made between those operators and the matrices that represent them in the canonical basis of $\mathbf{l}_{2}$. 

 For perturbations of the Laplace operator A in the form of an infinite sum of zero-range potentials located at points $\{\mathbf{x}_{n}\}_{m,n\in \mathbb{Z}}$  the set of boundary conditions of the form (\ref{simp3})  also generates a singular self-adjoint perturbation $A_{L}$ of $A$ if
 \begin{equation}\label{dist}
 	\underset{_{m,n\in \mathbb{Z}}}{\inf}\left|\mathbf{x}_{m}-\mathbf{x}_{n} \right|=d>0.   
 \end{equation}

    \begin{thm} [\cite{GHM}]\label{kreinmain1+}
  Let $Q(z)$ be the operator function in $\mathbf{l}_{2}$ generated by the infinite matrix 
 \begin{equation}\label{simp3}
 Q(z)=\left(q_{mn}(z) \right)_{m,n\in \mathbb{Z}}, \, q_{mn}(z)=\left\lbrace \begin{array}{c} 
 q_{mn}(z)=g(z;\mathbf{x}_{m}-\mathbf{x}_{n}), \quad m\neq n,\\
 q_{mm}(z)=\frac{i\sqrt{z}}{4\pi}
 \end{array}\right. . 
 \end{equation}
 and $L$ be a selfadjoint operator in $\mathbf{l}_{2}$  defined by an infinite Hermitian matrix $\left( w_{mn}\right)_{m,n\in \mathbb{Z}}$. If the condition (\ref{dist}) holds, then the operator function
 \begin{equation}\label{krein6a} \begin{array}{c}
  R_{L}(z)=R(z)-\sum\limits_{m,n\in \mathbb{Z}}^{}\left([ Q(z)+4\pi L]^{-1}\right)_{mn} \left(\cdot,g_{n}(\bar{z};\cdot) \right)g_{m}(z;\cdot), \\ g_{n}(z;\mathbf{x})= R(z)\delta(\cdot -\mathbf{x}_{n})(\mathbf{x}), \end{array}
 \end{equation} 
 is the resolvent of the selfadjoint operator $A_{L}$ in $\mathbf{L}_{2}(\mathbf{R}_{3})$, which is the Laplace operator with the domain
 \[
 \mathcal{D}_{L}:=  \left\lbrace f: \, f=f_{0}+g, \, f_{0}\in {H}_{2}^{2}\left(\mathbf{R}_{3}\right), \, g\in \mathcal{N}, \right.  \] 
 
 \[ \left. \begin{array}{c} \underset{\rho_{m}\rightarrow 0}{\lim}\left[\frac {\partial} {\partial\rho_{m}}\left(\rho_{m}f(\mathbf{x})\right)\right]+\sum\limits_{n\in \mathbb{Z}}^{}w_{mn} \underset
 {\rho_{n}\rightarrow 0}{\lim}\,[\rho_{n}\,f(\mathbf{x})] =0, \\ \rho_{n}=|\mathbf{x}-\mathbf{x}_{n}|, \quad n\in \mathbb{Z} .  \end{array}\right.  \]
 \end{thm}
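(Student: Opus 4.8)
I would deduce the statement from Theorem \ref{kreinnext1}, taking $\mathcal{H}=\mathbf{L}_{2}(\mathbf{R}_{3})$, $\mathcal{K}=\mathbf{l}_{2}$, letting $G(z)$ be the operator that sends the canonical vector $\mathbf{e}_{n}$ to $g_{n}(z;\cdot)=R(z)\delta(\cdot-\mathbf{x}_{n})=\tfrac{1}{4\pi}e^{i\sqrt{z}|\cdot-\mathbf{x}_{n}|}|\cdot-\mathbf{x}_{n}|^{-1}$, using $Q(z)$ from (\ref{simp3}), and using $4\pi L$ as the parameter operator. Once $G(z),Q(z)$ are shown to meet the hypotheses of Theorem \ref{kreinnext1} and $Q(z)+4\pi L$ is boundedly invertible for non-real $z$, formula (\ref{krein6a}) is just (\ref{krein5}) written in the canonical basis --- note $(G(\bar{z})^{*}h)_{n}=(h,g_{n}(\bar{z};\cdot))$ --- so $R_{L}(z)$ is the resolvent of a singular selfadjoint perturbation $A_{L}$ of $-\Delta$, and only the identification $\mathcal{D}(A_{L})=\mathcal{D}_{L}$ would then remain.

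The analytic core --- and the only place the separation hypothesis (\ref{dist}) is needed --- is the boundedness of $G(z)$ on $\mathbf{l}_{2}$. For finitely supported $\mathbf{c}=(c_{n})$ one has $\|G(z)\mathbf{c}\|^{2}=\sum_{m,n}c_{m}\overline{c_{n}}\,(g_{m}(z;\cdot),g_{n}(z;\cdot))$, and from $R(\bar{z})R(z)=(2i\,\mathrm{Im}\,z)^{-1}(R(z)-R(\bar{z}))$ and (\ref{lapl}) the Gram entries are explicit: the diagonal ones equal the constant $(8\pi\,\mathrm{Im}\sqrt{z})^{-1}$, while for $m\ne n$ one gets $(g_{m}(z;\cdot),g_{n}(z;\cdot))=(2i\,\mathrm{Im}\,z)^{-1}(g_{n}(z;\mathbf{x}_{m})-g_{n}(\bar{z};\mathbf{x}_{m}))$, of modulus at most $e^{-\mathrm{Im}\sqrt{z}\,|\mathbf{x}_{m}-\mathbf{x}_{n}|}/(4\pi\,\mathrm{Im}\,z\,|\mathbf{x}_{m}-\mathbf{x}_{n}|)$. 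Because (\ref{dist}) forces the number of $n$ with $|\mathbf{x}_{m}-\mathbf{x}_{n}|\in[Rd,(R+1)d)$ to be $O(R^{2})$, the row sums $\sum_{n}|(g_{m}(z;\cdot),g_{n}(z;\cdot))|$ are bounded uniformly in $m$, and Schur's test yields $\|G(z)\|<\infty$; holomorphy of $G(z)$ off $\mathbf{R}$ and identity (\ref{res3}) are then inherited entrywise from $R(z)$. Two further hypotheses of Theorem \ref{kreinnext1} are routine: $G(z)$ is injective (compare the singular parts $c_{m}(4\pi|\mathbf{x}-\mathbf{x}_{m}|)^{-1}$ of $\sum c_{n}g_{n}(z;\cdot)$ at each $\mathbf{x}_{m}$), so $0$ is not an eigenvalue of $G(z)^{*}G(z)$; and $\mathcal{D}(-\Delta)\cap\mathcal{N}=\{0\}$ for $\mathcal{N}=\overline{G(z_{0})\mathbf{l}_{2}}$, since $(-\Delta-z_{0})$ sends every element of $\mathcal{N}$ to a distribution supported on the closed measure-zero set $\{\mathbf{x}_{n}\}$ (closed by (\ref{dist})), which lies in $\mathbf{L}_{2}(\mathbf{R}_{3})$ only for the zero vector.

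For the $Q$-function I would verify the two properties entrywise. The relation $Q(z)^{*}=Q(\bar{z})$ follows from (\ref{simp3}) and the branch convention $\sqrt{\bar{z}}=-\overline{\sqrt{z}}$. For $Q(z)-Q(z_{0})=(z-z_{0})G(\bar{z_{0}})^{*}G(z)$: the $(m,n)$ entry of $G(\bar{z_{0}})^{*}G(z)$ is $(g_{n}(z;\cdot),g_{m}(\bar{z_{0}};\cdot))=(R(z_{0})R(z)\delta(\cdot-\mathbf{x}_{n}),\delta(\cdot-\mathbf{x}_{m}))$, which by the resolvent identity equals $(z-z_{0})^{-1}(g_{n}(z;\mathbf{x}_{m})-g_{n}(z_{0};\mathbf{x}_{m}))$; for $m\ne n$ this is $(z-z_{0})^{-1}(q_{mn}(z)-q_{mn}(z_{0}))$, while for $m=n$ the quantity $(R(z_{0})g_{m}(z;\cdot))(\mathbf{x}_{m})$ is finite because $R(z_{0})$ smooths into $H^{2}\subset C$, and the $z$-independent singular part $(4\pi\rho_{m})^{-1}$ of $g_{m}(z;\mathbf{x})$ cancels in the difference, leaving $(z-z_{0})^{-1}(i\sqrt{z}-i\sqrt{z_{0}})/(4\pi)=(z-z_{0})^{-1}(q_{mm}(z)-q_{mm}(z_{0}))$ --- which is exactly why the renormalized diagonal $q_{mm}(z)=i\sqrt{z}/(4\pi)$ is the right choice. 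Bounded invertibility of $Q(z)+4\pi L$ for non-real $z$ follows from $\mathrm{Im}(Q(z)+4\pi L)=\mathrm{Im}(z)\,G(z)^{*}G(z)>0$ together with $L=L^{*}$; more robustly, and in particular when $L$ is unbounded, one reads the boundary conditions defining $\mathcal{D}_{L}$ as selecting the selfadjoint extension of $B=-\Delta|_{\{f\in{H}_{2}^{2}(\mathbf{R}_{3}):\,f(\mathbf{x}_{n})=0\ \forall n\}}$ attached to the selfadjoint operator $L$ in $\mathbf{l}_{2}$ through the boundary map $f\mapsto(\{\lim_{\rho_{n}\to0}\rho_{n}f\},\{\lim_{\rho_{n}\to0}\partial_{\rho_{n}}(\rho_{n}f)\})$, which gives selfadjointness of $A_{L}$ and $z\in\rho(A_{L})$ simultaneously. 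Granting this, Theorem \ref{kreinnext1} applies and yields (\ref{krein6a}).

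To identify the domain, fix non-real $z$ and $h\in\mathbf{L}_{2}(\mathbf{R}_{3})$ and write $f=R_{L}(z)h=f_{0}+g$ with $f_{0}=R(z)h\in{H}_{2}^{2}(\mathbf{R}_{3})$ and $g=-\sum_{m}c_{m}g_{m}(z;\cdot)\in\mathcal{N}$, where $c_{m}=\sum_{n}([Q(z)+4\pi L]^{-1})_{mn}(h,g_{n}(\bar{z};\cdot))$. The expansion $g_{m}(z;\mathbf{x})=(4\pi\rho_{m})^{-1}+i\sqrt{z}/(4\pi)+O(\rho_{m})$ near $\mathbf{x}_{m}$, with $f_{0}$ and the $g_{k}(z;\cdot)$ ($k\ne m$) continuous there, gives $\lim_{\rho_{m}\to0}\rho_{m}f=-c_{m}/(4\pi)$ and $\lim_{\rho_{m}\to0}\partial_{\rho_{m}}(\rho_{m}f)=f_{0}(\mathbf{x}_{m})-\tfrac{i\sqrt{z}}{4\pi}c_{m}-\sum_{k\ne m}c_{k}g_{k}(z;\mathbf{x}_{m})$; since $(h,g_{n}(\bar{z};\cdot))=(R(z)h)(\mathbf{x}_{n})=f_{0}(\mathbf{x}_{n})$, the defining system reads $\tfrac{i\sqrt{z}}{4\pi}c_{m}+\sum_{k\ne m}c_{k}g_{k}(z;\mathbf{x}_{m})=f_{0}(\mathbf{x}_{m})-4\pi\sum_{n}w_{mn}c_{n}$, and substituting this together with $c_{n}=-4\pi\lim_{\rho_{n}\to0}\rho_{n}f$ into the previous line reproduces the boundary relations of the statement, modulo the bookkeeping of the numerical $4\pi$ factors that fixes the precise relation between $L$ and the coefficients $w_{mn}$. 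Thus $\mathcal{D}(A_{L})\subseteq\mathcal{D}_{L}$, and equality follows because both are domains of selfadjoint extensions of $B$; that $A_{L}$ acts as $-\Delta$ there is clear from $(-\Delta-z)g_{m}(z;\cdot)=\delta(\cdot-\mathbf{x}_{m})$. The step I expect to be the genuine obstacle is the Schur/Gram estimate of the second paragraph --- it is what turns the closed-form Gram entries into boundedness of $G(z)$ (and, where one needs it, into a two-sided bound) and collapses completely without a positive inter-point distance; a secondary subtlety is the bounded invertibility of $Q(z)+4\pi L$ for unbounded $L$, which is why I would route the selfadjointness of $A_{L}$ through the extension-theoretic description of $\mathcal{D}_{L}$ rather than through Krein's formula alone.
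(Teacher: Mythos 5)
Your overall route is the right one and matches the spirit of the paper (which does not reprove Theorem \ref{kreinmain1+} but quotes it from the literature and indicates that it follows from Theorem \ref{kreinnext1}, bypassing extension theory): take $\mathcal{K}=\mathbf{l}_{2}$, $G(z)\mathbf{e}_{n}=g_{n}(z;\cdot)$, verify (\ref{res3}), the $Q$-relations, $\ker G(z)=\{0\}$ and $\mathcal{D}(A)\cap\mathcal{N}=\{0\}$, and read (\ref{krein6a}) as (\ref{krein5}) in the canonical basis; your Schur/Gram estimate under (\ref{dist}), the support argument for $\mathcal{D}(-\Delta)\cap\mathcal{N}=\{0\}$, and the boundary-value computation identifying $\mathcal{D}_{L}$ are all sound. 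The genuine gap is the bounded invertibility of $Q(z)+4\pi L$, which in Theorem \ref{kreinnext1} is a \emph{hypothesis} and therefore must be verified here. Your first argument — ``$\mathrm{Im}\,(Q(z)+4\pi L)=\mathrm{Im}(z)\,G(z)^{*}G(z)>0$ together with $L=L^{*}$'' — does not suffice as stated: what you established earlier is only that $0$ is not an eigenvalue of $G(z)^{*}G(z)$, and strict positivity of the imaginary part without a lower bound gives injectivity, not a bounded inverse (the range need not be closed). Your fallback, reading $\mathcal{D}_{L}$ as an extension of $B$ parametrized by $L$ through a boundary map, is only asserted; to make it work you would have to prove that this map is a (generalized) boundary triplet for $B^{*}$ with Weyl function essentially $Q$, which is precisely the extension-theoretic machinery the paper's framework is designed to avoid, and is nontrivial for infinitely many centres and unbounded $L$.

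The gap is fixable within your own estimates, and that is the cleaner way to close it: under (\ref{dist}) the Gram operator $G(z)^{*}G(z)$ is not merely injective but \emph{boundedly} invertible. Indeed, its diagonal equals $(8\pi\,\mathrm{Im}\sqrt{z})^{-1}$ while the off-diagonal row sums are bounded by $(8\pi\,\mathrm{Im}\sqrt{z})^{-1}\sum_{n\neq m}e^{-\mathrm{Im}\sqrt{z}\,|\mathbf{x}_{m}-\mathbf{x}_{n}|}$, and the shell count $O(R^{2})$ makes this strictly smaller than the diagonal once $\mathrm{Im}\sqrt{z}\,d$ is large; diagonal dominance then gives $G(z_{0})^{*}G(z_{0})\geq c_{0}>0$ for such $z_{0}$, and the identity $G(z)=\left[I+(z-z_{0})R(z)\right]G(z_{0})$ with the boundedly invertible factor $(A-z_{0})R(z)$ propagates the lower bound to all non-real $z$. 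With $G(z)^{*}G(z)\geq c(z)>0$ one gets $\left|\mathrm{Im}\left((Q(z)+4\pi L)\mathbf{h},\mathbf{h}\right)\right|\geq c(z)\left|\mathrm{Im}z\right|\,\|\mathbf{h}\|^{2}$ on $\mathcal{D}(L)$, and the same bound for the adjoint $Q(\bar{z})+4\pi L$, which yields bounded invertibility even for unbounded selfadjoint $L$ — no boundary triplets needed. Two smaller points: you should also note that $Q(z)$ itself is bounded (same Schur estimate), since Theorem \ref{kreinnext1} treats it as an operator function; and the $4\pi$ bookkeeping you defer is real — your computation actually produces $16\pi^{2}w_{mn}$ in the boundary condition relative to the normalization $Q(z)+4\pi L$, a discrepancy that also exists between the paper's own displays (\ref{simp3a}), Theorem \ref{kreinmain1+} and Theorem \ref{help1}, so it should be stated explicitly rather than waved away.
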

 The above results concerning perturbations of the Laplace operator in the form of a finite or infinite sum of zero-range potentials do not contain anything new compared to those that were previously collected in the books\cite {AGHH, AK}. However, they can be obtained without the use of concepts and labor-consuming constructions of the theory of selfadjoint extensions of symmetric operators (see,\cite{Ad}).

Of course, the sparseness (\ref{dist}) of the set $\{\mathbf{x}_{n}\}_{n\in \mathbb{Z}} $ is not necessary for the expression (\ref{krein6a}) in Theorem \ref{kreinmain1+} to be the resolvent of a self-adjoint perturbation of the Laplace operator in the form of an infinite sum of zero-range potentials. If (\ref{dist}) is not satisfied, then the operator functions $R_{L} (z)$, formally defined as in Theorem \ref{kreinmain1+}, can nevertheless be resolvents of close singular selfadjoint perturbations $A_{L}$ of $A$ for account of special choice of the parameter $L$. 
\begin{thm}\label{help1}
Let $\{\mathbf{x}_{n}\}_{n\in \mathbb{Z}}$ be a sequence of different points of $\mathbf{R}_{3}$ such that each compact domain of $\mathbf{R}_{3}$ contains a finite number of accumulation points of this set. Set
\begin{equation}\label{help2a}
\delta_{0}=|\mathbf{x}_{0}|; \; \delta_{n}=\underset{-n\leq j\neq k \leq n}{\min}|\mathbf{x}_{j}-\mathbf{x}_{k}|,  \; n=1,2,... 
\end{equation} 
Let $L$ be a selfadjoint operator in $\mathbf{l}_{2}$ such that the point $z=0$ is in the resolvent set of $L$ and the matrix $\left( b_{mn}\right)_{m,n\in \mathbb{Z}}$ of operator $|L|^{-\frac{1}{2}}$ in the canonical basis $\left\lbrace \mathbf{e}_{n}=\left(\delta_{nm} \right)_{m\in \mathbb{Z}} \right\rbrace_{n\in \mathbb{Z}} $ of $\mathbf{l}_{2}$ satisfies the conditions:
\begin{equation}\label{cond1}
\sum_{nm\in \mathbb{Z}}\left|b_{nm}\right|<\infty, \quad  \sum_{nm\in \mathbb{Z}}\left|b_{nm}\right|\frac{1}{\delta_{m}}<\infty.
\end{equation}
Then the operator function
\begin{equation}\label{krein7} \begin{array}{c}
		R_{L}(z)=R(z)-\sum_{m,n\in \mathbb{Z}}\left([ Q(z)+4\pi L]^{-1}\right)_{mn} \left(\cdot,g_{n}(\bar{z};\cdot) \right)g_{m}(z;\cdot) \\ =
	R(z)-\tilde{G}(z)\left([ \tilde{Q}(z)+4\pi J_{L}]^{-1}\right)\tilde{G}(\bar{z})^{*},\\ 	
		g_{n}(z;\mathbf{x})= R(z)\delta(\cdot -\mathbf{x}_{n})(\mathbf{x}), \\ \tilde{G}(z)\mathbf{h}=\sum_{n}\left(|L|^{-\frac{1}{2}}\mathbf{h},\mathbf{e}_{n}\right)g_{n}(z;\cdot), \, \mathbf{h}\in \mathbf{l}_{2},\\
		\tilde{Q}(z)=|L|^{-\frac{1}{2}}Q(z)|L|^{-\frac{1}{2}}, \quad J_{L}=L\cdot |L|^{-1},
	\end{array}
\end{equation}
is the resolvent of the close singular perturbation $A_{L}$ of the Laplace operator A 
with the domain
\begin{equation}\begin{array}{c}
\mathcal{D}_{L}:=  \left\lbrace f: \, f=f_{0}+g, \, f_{0}\in {H}_{2}^{2}\left(\mathbf{R}_{3}\right),\right\rbrace \, g\in \mathcal{N},  \\

  \underset{\rho_{m}\rightarrow 0}{\lim}\left[\frac {\partial} {\partial\rho_{m}}\left(\rho_{m}f(\mathbf{x})\right)\right]+4\pi\sum\limits_{n\in \mathbb{Z}}^{}\left( L\mathbf{e}_{n},\mathbf{e}_{m}\right)  \underset
	{\rho_{n}\rightarrow 0}{\lim}\,[\rho_{n}\,f(\mathbf{x})] =0, \\ \rho_{n}=|\mathbf{x}-\mathbf{x}_{n}|, \quad n\in \mathbb{Z} .  \end{array} \end{equation}
\end{thm}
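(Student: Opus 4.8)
The plan is to verify that the operator function $R_L(z)$ in \eqref{krein7} satisfies the hypotheses of Theorem \ref{kreinnext1}, with auxiliary space $\mathcal{K}=\mathbf{l}_2$ and with the triple $\bigl(\tilde G(z),\tilde Q(z),J_L\bigr)$ playing the roles of $\bigl(G(z),Q(z),L\bigr)$; once that is done, the identification of $A_L$ as the Laplacian with the stated boundary conditions follows exactly as in the finite case \eqref{simp3a} and in Theorem \ref{kreinmain1+}, by passing $J_L=L\cdot|L|^{-1}$ and $\tilde Q=|L|^{-1/2}Q|L|^{-1/2}$ back through the substitution. So the work is entirely in checking that the infinite sums converge in the required operator topologies and that the two bulleted conditions on $\tilde G(z)$ and the two on $\tilde Q(z)$ in Theorem \ref{kreinnext1} hold.

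First I would establish that $\tilde G(z)$ is a well-defined bounded operator from $\mathbf{l}_2$ to $\mathbf{L}_2(\mathbf{R}_3)$ and that it is trace class (which is what makes $A_L$ close to $A$). The key estimates are the elementary pointwise bound $\|g_n(z;\cdot)\|_{\mathbf{L}_2(\mathbf{R}_3)}\le C(z)\,\delta_n^{-1/2}$ obtained from \eqref{lapl}--\eqref{simp1} — near $\mathbf{x}_n$ the function $g_n$ behaves like $(4\pi\rho_n)^{-1}$, which is square-integrable over a ball of radius $\sim\delta_n$ with $\mathbf{L}_2$-norm controlled by $\delta_n^{-1/2}$, while the tail is bounded by the resolvent decay — together with the finiteness of $\sum_{m,n}|b_{nm}|$ and $\sum_{m,n}|b_{nm}|\delta_m^{-1/2}$ coming from \eqref{cond1} (note $\delta_m^{-1}$ dominates $\delta_m^{-1/2}$ once $\delta_m\le 1$, and only finitely many $\delta_m$ exceed $1$, so the second condition in \eqref{cond1} indeed supplies $\sum|b_{nm}|\delta_m^{-1/2}<\infty$). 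Since $\tilde G(z)\mathbf{h}=\sum_n\bigl(|L|^{-1/2}\mathbf{h},\mathbf{e}_n\bigr)g_n(z;\cdot)$ factors as $\tilde G(z)=G_\infty(z)\,|L|^{-1/2}$, where $G_\infty(z)\mathbf{e}_n=g_n(z;\cdot)$, and $|L|^{-1/2}$ has absolutely summable matrix entries while the $g_n$ have summably small norms, $\tilde G(z)$ is a norm-convergent sum of rank-one operators with summable norms, hence trace class; this also immediately yields the resolvent identity \eqref{res3} for $\tilde G(z)$ from the corresponding identity for each $g_n$, which holds because $g_n(z;\cdot)=R(z)\delta(\cdot-\mathbf{x}_n)$ and $R(z)$ satisfies the Hilbert identity. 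The non-degeneracy condition — zero is not an eigenvalue of $\tilde G(z)^*\tilde G(z)$ — reduces to injectivity of $\tilde G(z)$, which follows from injectivity of $|L|^{-1/2}$ (guaranteed by $0\in\rho(L)$) together with linear independence and completeness of $\{g_n(z;\cdot)\}$ in its closed span; likewise $\mathcal{D}(-\Delta)={H}_2^2(\mathbf{R}_3)$ meets $\mathcal{N}=\overline{\tilde G(z_0)\mathbf{l}_2}$ trivially because each nonzero element of $\mathcal{N}$ has a $\rho_n^{-1}$-type singularity at some $\mathbf{x}_n$ and hence cannot lie in ${H}_2^2$.

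Next I would treat $\tilde Q(z)=|L|^{-1/2}Q(z)|L|^{-1/2}$. Its matrix entries are $\sum_{j,k}b_{mj}q_{jk}(z)b_{kn}$, and using $|q_{jk}(z)|\le C(z)\,\delta_k^{-1}$ for $j\neq k$ (the same singular bound, since $q_{jk}(z)=g_k(z;\mathbf{x}_j-\mathbf{x}_k)$) and $|q_{jj}(z)|=|\sqrt z|/4\pi$, the double sum converges absolutely by \eqref{cond1}; so $\tilde Q(z)$ is a well-defined bounded — indeed Hilbert–Schmidt — operator in $\mathbf{l}_2$, holomorphic off the real axis. The symmetry $\tilde Q(z)^*=\tilde Q(\bar z)$ and the difference formula $\tilde Q(z)-\tilde Q(z_0)=(z-z_0)\tilde G(\overline{z_0})^*\tilde G(z)$ descend termwise from the corresponding properties of $Q(z)$ (established in Theorem \ref{kreinmain1+} via $q_{mn}(z)-q_{mn}(z_0)=(z-z_0)\bigl(g_n(z;\cdot),g_m(\overline{z_0};\cdot)\bigr)$, itself a consequence of the Hilbert identity for $R(z)$) after conjugating by $|L|^{-1/2}$ and using the factorization of $\tilde G$. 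Finally, for the invertibility of $\tilde Q(z)+4\pi J_L$ for non-real $z$: writing $L=|L|^{1/2}J_L|L|^{1/2}$ one checks formally $|L|^{1/2}\bigl(\tilde Q(z)+4\pi J_L\bigr)|L|^{1/2}=Q(z)+4\pi L$, so the invertibility is inherited from the self-adjointness of $Q(z)+4\pi L$ combined with the fact that its imaginary part $\operatorname{Im}Q(z)=(\operatorname{Im}z)\,G_\infty(\bar z)^*G_\infty(z)$ is strictly positive (or negative) definite for $\operatorname{Im}z\gtrless 0$; made rigorous, this gives a bounded inverse for $\tilde Q(z)+4\pi J_L$, after which \eqref{krein5} of Theorem \ref{kreinnext1} applies verbatim and produces $R_L(z)$ as the resolvent of a singular selfadjoint perturbation $A_L$, which is close because $R_L(z)-R(z)=-\tilde G(z)[\tilde Q(z)+4\pi J_L]^{-1}\tilde G(\bar z)^*$ is a product with the trace-class factor $\tilde G(z)$.

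The main obstacle I anticipate is the convergence bookkeeping around the two hypotheses in \eqref{cond1}: one must carefully match the $\delta_m^{-1}$ weight (controlling the near-singularity $\mathbf{L}_2$-mass of $g_m$, which is the worst case when accumulation points are present) against the decay of $|b_{nm}|$, and simultaneously verify that the rearrangements of the double series defining $\tilde G$, $\tilde Q$ and the product $\tilde G(z)[\cdots]^{-1}\tilde G(\bar z)^*$ are all justified by absolute convergence — including the point that where several $\mathbf{x}_n$ coincide or cluster, the naive bound $\delta_n^{-1/2}$ for $\|g_n\|$ must be replaced by the sharper observation that $g_n$ and nearby $g_{n'}$ overlap, but this is absorbed once one estimates $\|G_\infty(z)\|$ through its (finite, by the local-finiteness-of-accumulation-points hypothesis) clustering structure rather than termwise. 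The identification of the domain $\mathcal{D}_L$ is then routine: evaluate $R_L(z)f$ from \eqref{krein7}, read off that its general element is $f_0+g$ with $f_0\in{H}_2^2(\mathbf{R}_3)$ and $g\in\mathcal{N}$, and extract the boundary relations by computing $\lim_{\rho_m\to0}\partial_{\rho_m}(\rho_m f)$ and $\lim_{\rho_n\to0}\rho_n f$ from the explicit kernels, exactly as sketched for \eqref{simp3a}.
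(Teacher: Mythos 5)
Your overall strategy coincides with the paper's (verify the hypotheses of Theorem \ref{kreinnext1} for the triple $\tilde G(z),\tilde Q(z),J_{L}$, show that $\tilde G(z)$ and $\tilde Q(z)$ are Hilbert--Schmidt/nuclear from (\ref{cond1}), and deduce closeness from the trace-class product), but there is a genuine gap at the decisive step: the bounded invertibility of $\tilde Q(z)+4\pi J_{L}$ for non-real $z$. Your route --- conjugate by $|L|^{1/2}$ back to $Q(z)+4\pi L$ and invoke ``strict positivity'' of $\mathrm{Im}\,Q(z)=(\mathrm{Im}z)\,G_{\infty}(\bar z)^{*}G_{\infty}(z)$ --- does not work here. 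When the points $\mathbf{x}_{n}$ accumulate, neither $G_{\infty}(z)$ (the map $\mathbf{e}_{n}\mapsto g_{n}(z;\cdot)$) nor the matrix $Q(z)$ need define bounded operators on $\mathbf{l}_{2}$; this is precisely why the theorem works with $|L|^{-\frac12}Q(z)|L|^{-\frac12}$ and why the sparseness condition (\ref{dist}) can be dropped only for special $L$. So the identity $|L|^{1/2}\bigl(\tilde Q(z)+4\pi J_{L}\bigr)|L|^{1/2}=Q(z)+4\pi L$ is purely formal. Moreover, even for the well-defined operator $\tilde Q(z)+J_{L}$, the imaginary part of $\tilde Q(z)$ is a \emph{compact} nonnegative operator, hence never bounded below in infinite dimensions: strict positivity on nonzero vectors gives at best injectivity, not a bounded inverse. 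The paper closes this gap in two steps that your sketch omits: (i) injectivity of $\tilde Q(z)+J_{L}$, proved by representing $\mathrm{Im}\bigl((\tilde Q(z)+J_{L})\mathbf{h},\mathbf{h}\bigr)/\mathrm{Im}z$ as an integral of $|\hat{\mathbf{h}}(\mathbf{k})|^{2}$ with $\hat{\mathbf{h}}(\mathbf{k})=\sum_{n}\bigl(|L|^{-\frac12}\mathbf{h},\mathbf{e}_{n}\bigr)e^{-i\mathbf{k}\cdot\mathbf{x}_{n}}$ and recovering each coefficient by a Bohr-type mean over expanding cubes, so that $\hat{\mathbf{h}}\equiv 0$ forces $|L|^{-\frac12}\mathbf{h}=0$ and hence $\mathbf{h}=0$; (ii) since $\tilde Q(z)$ is compact and $J_{L}$ is a self-adjoint involution, injectivity means $-1$ is not an eigenvalue of the compact operator $J_{L}\tilde Q(z)$, and the Fredholm alternative then yields the bounded inverse $[\tilde Q(z)+J_{L}]^{-1}=J_{L}[J_{L}\tilde Q(z)+I]^{-1}$. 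Without compactness plus the Fredholm alternative (or an equivalent device), the boundedness of $\Gamma(z)$ is not established.

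Two secondary flaws in your quantitative input. First, $\|g_{n}(z;\cdot)\|_{\mathbf{L}_{2}(\mathbf{R}_{3})}$ is bounded uniformly in $n$ (the $1/\rho$ singularity is square-integrable in $\mathbf{R}_{3}$; the norm depends only on $\mathrm{Im}\sqrt z$), so the heuristic $\|g_{n}\|\lesssim\delta_{n}^{-1/2}$ with the $L_{2}$-mass of the singularity scaling like $\delta_{n}^{-1/2}$ is backwards; the weights $\delta_{m}^{-1}$ in (\ref{cond1}) are needed not for $\|g_{m}\|$ but for the off-diagonal entries $q_{jk}(z)=g(z;\mathbf{x}_{j}-\mathbf{x}_{k})$. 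Second, your bound $|q_{jk}(z)|\le C(z)\,\delta_{k}^{-1}$ is false when $|j|>|k|$: one only has $|q_{jk}(z)|\le\frac{1}{4\pi}\delta_{\max(|j|,|k|)}^{-1}$, which is why the paper splits the sums as in (\ref{imp1})--(\ref{imp3}) and uses both conditions of (\ref{cond1}) simultaneously. Finally, dismissing $\tilde G(z)\mathbf{l}_{2}\cap\mathcal{D}(A)=\{0\}$ on the grounds that every nonzero element of $\mathcal{N}$ has a $\rho_{n}^{-1}$-type singularity is too quick near accumulation points; the paper instead pairs $R(z)w-\tilde G(z)\mathbf{h}$ with smooth test functions vanishing near all $\mathbf{x}_{n}$ and their accumulation points. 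These last items are repairable bookkeeping, but the invertibility argument is a missing idea, not a detail.
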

\begin{proof}

First of all, note that the mapping $\tilde{G}(z)$ from $\mathbf{l}_{2}$  to ${L}_{2}\left(\mathbf{R}_{3}\right)$ belongs to the Hilbert-Schmidt class. Indeed, for the canonical basis $\left\lbrace \mathbf{e}_{n} \right\rbrace_{n\in \mathbb{Z}} $ in $\mathbf{l}_{2}$ by virtue of (\ref{cond1}) we have
\begin{equation}\begin{array}{c}
		\sum_{n\in \mathbb{Z}}\left\|\tilde{G}(z)\mathbf{e}_{n}\right\|^{2}=\sum_{n\in \mathbb{Z}}\left(|L|^{-\frac{1}{2}}G(z)^{*}G(z) |L|^{-\frac{1}{2}}\mathbf{e}_{n},\mathbf{e}_{n}\right) \\=\sum_{n,m,m^\prime\in \mathbb{Z}}b_{nm^\prime}\cdot\frac{1}{8\pi}e^{-\mathrm{Im}\sqrt{z}|\mathbf{x}_{m}-\mathbf{x}_{m^\prime}|}\frac{\sin{\mathrm{Re}\sqrt{z}|\mathbf{x}_{m}-\mathbf{x}_{m^\prime}|}}{\mathrm{Re}\sqrt{z}|\mathbf{x}_{m}-\mathbf{x}_{m^\prime}|}\cdot b_{mn} \\
		\leq \frac{1}{8\pi}\sum_{n,m,m^\prime\in \mathbb{Z}}|b_{nm}|\cdot |b_{m^\prime n}| \leq \frac{1}{8\pi}\left( \sum_{n,m\in \mathbb{Z}}|b_{mn}|\right)^{2}<\infty. 
	\end{array}
\end{equation}

Turning to the operator function $\tilde{Q}(z)$, we represent the corresponding matrix function as the sum $D(z)+M(z)$ of matrices
\begin{equation}\label{diag1}
	D(z)=\frac{i\sqrt{z}}{4\pi}\left( \sum_{j\in \mathbb{Z}}b_{mj}b_{jn}\right)_{m,n\in\mathbb{Z}}   
\end{equation} 
and 
\begin{equation}\label{rest1}
		M(z)=\left(u_{mn}(z)=\sum_{n^\prime,m^\prime \in \mathbb{Z}}b_{n n^\prime}g(z;\mathbf{x}_{n^\prime}-\mathbf{x}_{m^\prime})b_{m^\prime m}, \, m^\prime\neq n^\prime \right)_{m,n\in \mathbb{Z}} .
\end{equation}
  	
  	Note that, according to the first assumption in (\ref{cond1}), $|L|^{-\frac{1}{2}} $ is a Hilbert-Schmidt operator, since $$\sum_{n,j\in \mathbb{Z}}b_{nj}b_{jn}=\sum_{n,j\in \mathbb{Z}}|b_{nj}|^{2} \leq \left( \sum_{n,j\in \mathbb{Z}}|b_{nj}|\right) ^{2}<\infty.$$
As follows, $D(z)=\frac{i\sqrt{z}}{4\pi}|L|^{-1}$ is a trace class operator.

Taking into account, further, that
\begin{equation}\label{imp1}
\begin{array}{c}
\left|\sum\limits_{m^\prime\neq n^\prime }g(z;\mathbf{x}_{n^\prime}-\mathbf{x}_{m^\prime})b_{m^\prime m}, \,   \right|\leq\sum\limits_{m^\prime\neq n^\prime }\frac{1}{4\pi\left| \mathbf{x}_{n^\prime}-\mathbf{x}_{m^\prime}\right|}|b_{m^\prime m}| \\ 
\leq \frac{1}{4\pi \delta_{n^\prime}}\sum\limits_{|m^{\prime}|\leq|n^{\prime}|}|b_{m^\prime m}|+\sum\limits_{|m^{\prime}|>|n^{\prime}|}\frac{1}{4\pi \delta_{m^\prime}}|b_{m^\prime m}|\leq \frac{1}{\delta_{n^\prime}}C^{(0)}(m)+C^{(1)}(m)_, \\
C^{(0)}(m)=\frac{1}{4\pi}\sum\limits_{j\in \mathbb{Z}}\left| b_{mj}\right|=\frac{1}{4\pi}\sum\limits_{j\in \mathbb{Z}}\left|b_{jm}\right|, \\ C^{(1)}(m)=\frac{1}{4\pi}\sum\limits_{j\in \mathbb{Z}}\left| b_{mj}\right|\frac{1}{\delta_j }=\frac{1}{4\pi}\sum\limits_{j\in \mathbb{Z}}\left|b_{mj}\right|\frac{1}{\delta_j },
\end{array}
\end{equation} 	
and that by (\ref{cond1})
\begin{equation}\label{imp2}
	\sum_{m\in \mathbb{Z}}C^{(0)}(m)<\infty,    \quad \sum_{m\in \mathbb{Z}}C^{(1)}(m)<\infty, 
\end{equation}
we conclude that
\begin{equation}\label{imp3}
	\left|u_{nm} (z)\right|\leq 4\pi\left[C^{(1)}(n)C^{(0)}(m)+C^{(1)}(m)C^{(0)}(n) \right] .  
\end{equation}
Therefore
\begin{equation}
\sum_{nm\in \mathbb{Z}}	\left|u_{nm} (z)\right|^{2}\leq\left( \sum_{nm\in \mathbb{Z}}	\left|u_{nm} (z)\right|\right)^{2}<\infty.
\end{equation}
We see that $M(z)$ is a Hilbert-Schmidt operator and hence the sum $\tilde{Q}(z)=D(z)+M(z)$ is at least a Hilbert-Schmidt operator.

Remember now that $\tilde{Q}(z)+J_{L} $ is an operator function of the Nevanlinna class and that for any $\mathbf{h}\in \mathbf{l}_{2}$ and any non-real $z$ we have
\begin{equation}\label{imp4}\begin{array}{c}
\frac{1}{\mathrm{Im}z}\cdot \mathrm{Im}\left[\left(\tilde{Q}(z)\mathbf{h},\mathbf{h}  \right)+\left( J_{L}\mathbf{h},\mathbf{h} \right) \right] \\ =\frac{1}{\left( 2\pi\right)^{3} }\int_{\mathbf{R}_{3}} \frac{1}{\left(|\mathbf{k}|^{2}-\mathrm{Re}z \right)^{2} +\left( \mathrm{Im}z\right)^{2} }\left|\sum_{n\in \mathbb{Z}}\left(|L|^{-\frac{1}{2}}\mathbf{h},\mathbf{e}_{n} \right)e^{-\mathbf{k}\cdot\mathbf{x}_{n}}\right|^{2} d\mathbf{x}\geq 0
\end{array}		
\end{equation}	

The function 
\begin{equation}\label{imp5}
	\hat{\mathbf{h}}(\mathbf{k})=\sum_{n\in \mathbb{Z}}\left(|L|^{-\frac{1}{2}}\mathbf{h},\mathbf{e}_{n} \right)e^{-\mathbf{k}\cdot\mathbf{x}_{n}}, \; \mathbf{h}\in \mathbf{l}_{2},
\end{equation}
that appears in this case in (\ref{imp4}) is bounded and continuous, since
\begin{equation*}
\sum_{n\in \mathbb{Z}}\left| \left(|L|^{-\frac{1}{2}}\mathbf{h},\mathbf{e}_{n} \right)\right|\leq \sum_{n\in \mathbb{Z}}\left\||L|^{-\frac{1}{2}}\mathbf{e}_{n} \right\|\cdot \left\| \mathbf{h}\right\|\leq \left( \sum_{m,n\in \mathbb{Z}}\left|b_{mn} \right|^{2} \right)^{\frac{1}{2}}\cdot \left\| \mathbf{h}\right\| <\infty.	
\end{equation*} 
Obviously, the equality sign in (\ref{imp4}) is possible if and only if $\hat{\mathbf{h}}(\mathbf{k}) \equiv 0$. But for an expanding family of domains $$ \mathbf{\Omega}_{K}=\left\lbrace\mathbf{k}:-K<k_{1},k_{2},k_{3}<K \right\rbrace \subset \mathbf{R}_{3} $$ and any $n\in \mathbb{Z}$ in this case it turns out that
\begin{equation*}
\underset{K\rightarrow \infty}{\lim}\frac{1}{8K^{3}}\int_{\mathbf{\Omega}_{K}}e^{i\mathbf{k}\cdot\mathbf{x}_{n}}\hat{\mathbf{h}}(\mathbf{k})d\mathbf{k}=\left(|L|^{-\frac{1}{2}}\mathbf{h},\mathbf{e}_{n} \right)=0.
\end{equation*}
This means that $|L|^{-\frac{1}{2}}\mathbf{h}=0$ and taking into account our assumptions, it turns out that $\mathbf{h} = 0$. Therefore "0" cannot be an eigenvalue of the operator $\tilde{Q}(z)+J_{L}, \, \mathrm{Im}z \neq 0$ and, evidently, the same is true for the operator $\left(J_{L}\cdot\tilde{Q}(z)+I\right)$. In other words, "-1" is not an eigenvalue of the operator $J_{L}\cdot\tilde{Q}(z)$. But $J_{L}\cdot\tilde{Q}(z)$ is a compact operator. As follows, $J_{L}\cdot\tilde{Q}(z)+I$ boundedly invertible and so is the operator $\tilde{Q}(z)+J_{L}$, $$\left[\tilde{Q}(z)+J_{L} \right]^{-1}=J_{L}\cdot \left[J_{L}\cdot\tilde{Q}(z)+I \right]^{-1} .$$
We proved that for non-real $z$ the values of operator function $\left[\tilde{Q}(z)+J_{L} \right]^{-1} $ are bounded operators.   

Suppose further that for some non-real $z$ there is a vector $f\in \mathbf{L}_{2}(\mathbf{R}_{3})$ from the linear set $\tilde{G}(z)\mathbf{l}_{2}$ that belongs to the domain $\mathfrak{D}(A)$ of the Laplace operator $A$. This vector as any vector from $\mathfrak{D}(A)$ can be represented in the form $f=R(z)w$ with some $w\in\mathbf{L}_{2}(\mathbf{R}_{3})$ while by our assumption there is a vectors $\mathbf{h}\in \mathbf{l}_{2}$ such that
\begin{equation}\label{lim-a}
	R(z)w-\tilde{G}(z)\mathbf{h}=0.
\end{equation}   

Now recall that for each $w\in\mathbf{L}_{2}(\mathbf{R}_{3})$ and any infinitesimal $\varepsilon>0$ it is possible to find an infinitely smooth compact function $\phi(\mathbf{x})$ which is also equal to zero at some $\eta$-neighborhood of isolated points and all the accumulation points of the set $\left\lbrace \mathbf{x}_{n} \right\rbrace$ on the support of $\phi(\mathbf{x})$ to satisfy the condition   
\begin{equation}\label{lim-b}
	\left| \left(w,\phi \right)_{\mathbf{L}_{2}(\mathbf{R}_{3})} \right|\geq\left(1-\varepsilon \right)\left\| w \right\|^{2}_{\mathbf{L}_{2}(\mathbf{R}_{3}}.
\end{equation}
Taking into account further that for $\phi(\mathbf{r})$, as well as for any smooth compact function,
\begin{equation}\label{laplres}
	\phi(\mathbf{x})= \frac{1}{4\pi}
	\int_{\mathbf{R}_{3}}\frac{e^{i\sqrt{z}|\mathbf{x}-\mathbf{x}^\prime|}}{|\mathbf{x}-\mathbf{x}^\prime|}\left[ -\Delta\phi\left(\mathbf{x}^\prime \right)-z\phi\left(\mathbf{x}^\prime \right)\right]  d\mathbf{x}^\prime, 
\end{equation}
we notice that 
\begin{equation*} \begin{array}{c}
		\left(G(z)w,\left[-\Delta\phi -\bar{z}\phi \right]\right)_{\mathbf{L}_{2}(\mathbf{R}_{3})}=\left(w,\phi\right)_{\mathbf{L}_{2}(\mathbf{R}_{3})}, \\  \left(\tilde{G}(z)\mathbf{h},\left[-\Delta\phi -\bar{z}\phi \right]\right)_{\mathbf{L}_{2}(\mathbf{R}_{3})} =0, \quad \mathbf{h}\in \mathbf{l}_{2}.		
	\end{array}
\end{equation*}
Hence by virtue of (\ref{lim-b}) we conclude that 
\begin{equation}\label{lim-c}
	\begin{array}{c}
		\|R(z)w-\tilde{G}(z)\mathbf{h}\|_{\mathbf{L}_{2}(\mathbf{R}_{3})}\cdot
		\left\| -\Delta\phi-\bar{z}\phi  \right\|_{\mathbf{L}_{2}(\mathbf{R}_{3})} \\ \geq 
		\left|\left( \left[R(z)w-\tilde{G}(z)\mathbf{h}\right],\left[-\Delta\phi-\bar{z}\phi \right] 
		\right)_{\mathbf{L}_{2}(\mathbf{R}_{3})}\right|  =\left|\left(w,\phi\right)_{\mathbf{L}_{2}(\mathbf{R}_{3})}\right| \\ \geq\left(1-\varepsilon\right)
		\left\| w\right\|^{2}_{\mathbf{L}_{2}(\mathbf{R}_{3})}.
	\end{array}
\end{equation}
But in view of (\ref{lim-a})  the last  inequality in (\ref{lim-c}) must necessarily be violated unless  $ w=0 $. Therefore  
\begin{equation}\label{imp6}
\tilde{G}(z)\mathbf{l}_{2} \cap \mathfrak{D}(A)=\{0\}.	
\end{equation}

From (\ref{imp6}) and the established properties of the operator function $R_{L}(z)$ it follows, in particular, that $\ker{R_L(z)}=\{0\}, \, \mathrm{Im}z\neq 0 $. An obvious consequence of (\ref{imp6}) and the established properties of the operator function $R_{L}(z)$ is the fulfillment of condition $\ker{R_L(z)}=\{0\}, \, \mathrm{Im}z\neq 0 $. Indeed by virtue of (\ref{imp6}) the equality $$R(z)w-\tilde{G}(z)\left([ \tilde{Q}(z)+4\pi J_{L}]^{-1}\right)\tilde{G}(\bar{z})^{*}w=0, \; \mathrm{Im}z\neq 0, $$ for some $w\in \mathbf{L}_{2}(\mathbf{R}_{3}$ implies $w=0$ .

The verification of the remaining conditions of  Theorem \ref{kreinnext1} for $R_{L}(z)$ is not difficult and is left to the reader.
\end{proof}
\section{Scatttering matrices}
From now on we will assume that the spectrum of selfadjoint operator $A$ is absolutely continuous  and as long as no mention of another that the operator functions $G(z)$ and $Q(z)$ and  the operator $L$ in the expression (\ref{krein5}) are such that all the conditions of Theorem \ref{kreinnext1} hold and that the difference of the resolvent of selfadjoint perturbation $A_{L}$ of $A$ and the resolvent of $A$ is a trace class operator. Hence for the pair of operators $ A, A_ {L} $, the following assertions of abstract scattering theory are true (for proofs and details, see \cite{ Kat},\cite{Ya}). 

The wave operators $\mathfrak{W}_{\pm}\left(A_{L},A \right)$ defined as strong limits
\begin{equation}\label{wave1}
\mathfrak{W}_{\pm}\left(A_{L},A \right) =s-\underset{t\rightarrow\pm\infty}{\lim}e^{iA_{L}t}e^{-iAt} 
\end{equation}
 exist and are isometric mappings of $\mathcal{H}$ onto the absolutely continuous subspace of $A_{L}$. The wave operators $\mathfrak{W}_{\pm}\left(A_{L},A \right)$ are intertwining for the spectral functions $E_{\lambda}, E^{(L)}_{\lambda}, \, -\infty<\lambda<\infty , $  of the operators $A,A_{L}$ in the sense that $$\mathfrak{W}_{\pm}\left(A_{L},A \right)E_{\lambda}=E^{(L)}_{\lambda}\mathfrak{W}_{\pm}\left(A_{L},A \right).$$
 
   The scattering operator, which is defined as the  product of  wave operators   
 \begin{equation}\label{scat1}
  \mathfrak{S}(A_{L},A)=\mathfrak{W}_{+}\left(A_{L},A \right)^{*}\mathfrak{W}_{-}\left(A_{L},A \right)
   \end{equation} 
  is an isometric operator in $\mathcal{H}$ and
\begin{equation}\label{commut}
	 E_{\lambda}\mathfrak{S}(A_{L},A)=\mathfrak{S}(A_{L},A)E_{\lambda}, \; -\infty<\lambda<\infty .
\end{equation}
 
 Therefore for the spectral representation of $A$ in 
$\mathcal{H}$ as the multiplication operator by $\lambda$ in the direct integral of Hilbert spaces 
$\mathfrak{h}(\lambda)$, $$\mathcal{H}\Rightarrow\int \limits_{-\infty}^{\infty}\oplus\mathfrak{h}(\lambda)d\lambda,$$
the scattering operator $\mathfrak{S}(A_{L},A)$ acts as the multiplication operator by a contractive operator  
function $S(A_{L},A)(\lambda)$, which will be below referred to as the scattering matrix.
 
 These rather general assertions are specified below for close singular perturbations.
 
 Let $\sigma (A)$ denote the spectrum of $A$ and  $\Delta$ is some interval that $\subseteq\sigma(A)$.  We will assume that the part of $A$ on $E(\Delta)\mathcal{H}$ 
 has the Lebesgue spectrum of multiplicity $\mathfrak{n}\leq\infty$.  
 Then there exists an isometric operator $\mathfrak{F}$, which maps $E(\Delta)\mathcal{H}$ onto the space 
 $\mathbb{L}^{2}(\Delta;\mathcal{N})$ of vector function on $\Delta$ with values in the auxiliary Hilbert space $\mathcal{N}, \, \dim\mathcal{N}=\mathfrak{n},$  and such that 
 $\mathfrak{F}A|_{E(\Delta)\mathcal{H}}\mathfrak{F}^{-1}$ is the multiplication operator by independent variable in 
 $\mathbb{L}^{2}(\Delta;\mathcal{N})$. Then  using the notation 
 $$\mathfrak{F}(E(\Delta)f)(\lambda)=\mathbf{f}(\lambda), \quad f\in \mathcal{H}, \; \mathbf{f}(\cdot)\in  
 \mathbb{L}^{2}(\Delta;\mathcal{N}), $$ for any $g$ from the domain of $A$ we can write $$\mathfrak{F}\left(E(\Delta)Af\right)(\lambda)=\lambda\cdot\mathbf{f}(\lambda) $$
 and by (\ref{commut})  for any $f,g\in E(\Delta)\mathcal{H}$ we have
 \begin{equation*} \begin{array}{c}
 	(f,g)_{\mathcal{H}}=\int_{\Delta}\left(\mathbf{f}(\lambda),\mathbf{g}(\lambda) \right) _{\mathcal{N}}d\lambda, \\
 (\mathfrak{S}(A_{L},A)f,g)_{\mathcal{H}}=\int_{\Delta}\left(S(A_{L},A)(\lambda)\mathbf{f}(\lambda),\mathbf{g}(\lambda) \right) _{\mathcal{N}}d\lambda.	
\end{array} \end{equation*}
\begin{thm}\label{suboper}
Let $A$ be a selfadjoint operator in Hilbert space $\mathcal{H}$ with absolutely continuous spectrum, $A_{L}$ be its close selfadjoint perturbation and the resolvent $R_{L}(z)$ of $A_{L}$ admits the representation (\ref{krein5}) with selfadjoint operator $L$ in Hilbert space $\mathcal{K}$,  functions $G(z)$ and $Q(z)$ with values being bounded operators from $\mathcal{K}$ to $\mathcal{H}$ and in $\mathcal{K}$, respectively satisfy all the conditions of Theorem \ref{kreinnext1}.  

Suppose additionally that \begin{itemize} 
\item {for fixed $\gamma>0$, some linearly independent system $\lbrace g_{k}\rbrace_{k\in \mathbb{Z}}\subset \mathcal{H}$ and orthonormal system $\lbrace \mathbf{h}_{j}\rbrace_{j\in \mathbb{Z}}\subset \mathcal{K}$ and a set of numbers $\lbrace b_{jk}\rbrace_{jk\in \mathbb{Z}} $ such that $$\sum_{j,k\in\mathbb{Z}} |b_{jk}|<\infty $$
the operator $G(-i\gamma)$ admits the representation 
\begin{equation}\label{G1}
G(-i\gamma)=\sum_{j,k\in\mathbb{Z}}b_{jk}\left(\cdot\, ,\mathbf{h}_{j} \right)_{\mathcal{K}}g_{k} ;
\end{equation}}

\item{ on some interval $\Delta\subset\sigma(A)$ the operator $A$ has the Lebesgue spectrum of multiplicity $\mathfrak{n}\leq\infty$ and for the operator function $$\Gamma(z)=\left[|L|^{-\frac{1}{2}}Q(z)|L|^{-\frac{1}{2}}+J_{L} \right]^{-1}, \; J_{L}=L\cdot |L|^{-1}, \; \mathrm{Im}z\neq 0, $$
almost everywhere on $\Delta$ there is a weak limit
$$ \Gamma(\lambda +i0)=\underset{\varepsilon\downarrow 0}{\lim}\Gamma(\lambda +i\varepsilon), \; \lambda\in \Delta ,$$ such that
\begin{equation}\label{ess}
\underset{\lambda\in\Delta }{\mathrm{esssup}} \left\|\Gamma(\lambda +i0)\right\|<\infty.	
\end{equation}
 }
\end{itemize}
Then the image $\mathfrak{F}\mathfrak{S}(A_{L},A)\mathfrak{F}^{-1}$ of scattering operator acts in $\mathbb{L}^{2}(\Delta;\mathcal{N})$ as the multiplication by operator function
\begin{equation}\label{suboper1a}\begin{array}{c}
	S(A_{L},A)(\lambda)=I-4\pi i\\ \times \sum_{j,k\in\mathbb{Z}}\left( \Gamma(\lambda+i0)\mathbf{h}_{j}, \mathbf{h}_{k} \right)_{\mathcal{K}}\left(\cdot  ,	\tilde{\mathbf{g}}_{j}(\lambda)\right)_{\mathcal{N}}\tilde{\mathbf{g}}_{k}(\lambda),  \\ \tilde{\mathbf{g}}_{j}(\lambda)=\sqrt{2\pi}(\lambda+i\gamma)^{-1}\sum_{l\in\mathbb{Z}}b_{jl}\mathbf{g}_{l}(\lambda).
	
\end{array}
\end{equation} 				
\end{thm}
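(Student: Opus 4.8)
The plan is to read the scattering matrix off the stationary (spectral) representation of $\mathfrak{S}(A_{L},A)$ and then insert the Krein‑resolvent structure (\ref{krein5}).

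\emph{Step 1: the stationary formula.} Since $R_{L}(z)-R(z)$ is trace class, the wave operators (\ref{wave1}) exist and are complete, and the scattering matrix for the pair $A_{L},A$ admits the representation
\[
S(A_{L},A)(\lambda)=I-2\pi i\,\mathfrak{F}_{\lambda}\,T(\lambda+i0)\,\mathfrak{F}_{\lambda}^{*},
\]
where $\mathfrak{F}_{\lambda}\colon\mathcal{H}\to\mathfrak{h}(\lambda)=\mathcal{N}$ is the fibre of the spectral representation $\mathfrak{F}$ of $A$ at the energy $\lambda$, so that $\mathfrak{F}_{\lambda}^{*}\mathfrak{F}_{\lambda}=\tfrac{1}{2\pi i}(R(\lambda+i0)-R(\lambda-i0))$ is the spectral density of $A$, and $T(z)$ is the $T$‑operator of the formal perturbation $\hat V=A_{L}-A$, i.e.\ the operator determined by $R(z)T(z)R(z)=R(z)-R_{L}(z)$. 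This is the abstract input, taken from \cite{Kat,Ya,AdP}.

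\emph{Step 2: a closed form for $T(z)$.} From (\ref{krein5}) we get $R(z)T(z)R(z)=R(z)-R_{L}(z)=G(z)[Q(z)+L]^{-1}G(\bar z)^{*}$. Taking $z_{0}=-i\gamma$ in (\ref{res3}) and using the elementary identity $I+(z+i\gamma)R(z)=(A+i\gamma)R(z)$, we obtain $G(z)=(A+i\gamma)R(z)G(-i\gamma)$ and, by adjunction, $G(\bar z)^{*}=G(-i\gamma)^{*}R(z)(A-i\gamma)$. Cancelling the two outer copies of $R(z)$ — performing the algebra first with the bounded operators $R(z)$, $R(-i\gamma)$ and only then stripping the resolvents off, which is licit because $G(-i\gamma)\mathcal{K}\subset\mathrm{Ran}\,R(-i\gamma)$ and $R(z)^{-1}(A+i\gamma)R(z)=A+i\gamma$ — we arrive at
\[
T(z)=(A+i\gamma)\,G(-i\gamma)\,[Q(z)+L]^{-1}\,G(-i\gamma)^{*}\,(A-i\gamma).
\]

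\emph{Step 3: compression onto the fibre.} Because $\mathfrak{F}$ diagonalises $A$, one has $\mathfrak{F}_{\lambda}(A+i\gamma)=(\lambda+i\gamma)\mathfrak{F}_{\lambda}$ and $(A-i\gamma)\mathfrak{F}_{\lambda}^{*}=(\lambda-i\gamma)\mathfrak{F}_{\lambda}^{*}$, so that
\[
\mathfrak{F}_{\lambda}\,T(\lambda+i0)\,\mathfrak{F}_{\lambda}^{*}=|\lambda+i\gamma|^{2}\,Z(\lambda)\,[Q(\lambda+i0)+L]^{-1}\,Z(\lambda)^{*},\qquad Z(\lambda):=\mathfrak{F}_{\lambda}G(-i\gamma).
\]
Next rewrite $[Q(z)+L]^{-1}=|L|^{-1/2}\Gamma(z)|L|^{-1/2}$, which is legitimate since $L$ and $|L|$ commute and $|L|^{-1/2}L|L|^{-1/2}=J_{L}$, and unfold the representation (\ref{G1}) of $G(-i\gamma)$ on the orthonormal system $\{\mathbf{h}_{j}\}$: this expresses $Z(\lambda)\mathbf{h}_{j}$ as a scalar multiple (carrying the factor $(\lambda+i\gamma)^{-1}$) of $\sum_{l}b_{jl}\mathbf{g}_{l}(\lambda)$, i.e.\ of $\tilde{\mathbf{g}}_{j}(\lambda)$. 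Substituting $Z(\lambda)$ and $\Gamma(\lambda+i0)$ back into the stationary formula and collecting the scalar factors — whose precise values $4\pi$, $\sqrt{2\pi}$ are pinned down by the normalisation of $\mathfrak{F}_{\lambda}$ and the definition of the $\mathbf{g}_{l}(\lambda)$ — produces the asserted identity (\ref{suboper1a}).

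\emph{The main obstacle.} The genuine work is to make Steps 1--3 rigorous simultaneously and a.e.\ on $\Delta$. One must promote the a priori \emph{weak} boundary value $\Gamma(\lambda+i0)$ of hypothesis (\ref{ess}) to a limit in a strong enough topology (operator norm, or at least Hilbert--Schmidt) for the limiting absorption principle to apply, so that $T(\lambda+i0)$ exists as a bounded operator between the relevant spaces and the compression $\mathfrak{F}_{\lambda}(\cdot)\mathfrak{F}_{\lambda}^{*}$ commutes with the limit $z\to\lambda+i0$; this in turn rests on local $A$‑smoothness of the vectors $g_{k}$, which follows from $\sum_{j,k}|b_{jk}|<\infty$ in (\ref{G1}) — that condition forces $G(-i\gamma)$ to be Hilbert--Schmidt with a kernel summable in the $\{\mathbf{h}_{j}\}\otimes\{g_{k}\}$ splitting, hence makes $\lambda\mapsto Z(\lambda)=\mathfrak{F}_{\lambda}G(-i\gamma)$ well defined, bounded and continuous a.e.\ on $\Delta$, exactly the input that upgrades the formal identities above to genuine ones. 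The unbounded factors $A\pm i\gamma$ in Step 2 are only a bookkeeping nuisance, handled as indicated there, and the contractivity that $S(\lambda)$ must enjoy is automatic from the abstract theory (and is visible directly from the Nevanlinna property of $Q(z)+L$ underlying (\ref{imp4})).
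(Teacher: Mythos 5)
Your route is genuinely different from the paper's: you want to read the scattering matrix off a stationary formula $S(\lambda)=I-2\pi i\,\mathfrak{F}_{\lambda}T(\lambda+i0)\mathfrak{F}_{\lambda}^{*}$ with a $T$-operator defined through $R(z)T(z)R(z)=R(z)-R_{L}(z)$, whereas the paper computes the quadratic form of $\mathfrak{S}(A_{L},A)$ from the Abel-limit representation (\ref{wave2}) of the wave operators, inserts (\ref{krein5}) and (\ref{res3}), passes to the limit with the Hardy-space projections of (\ref{gen}) on the dense set of fibre functions satisfying (\ref{help3}), and removes that restriction by the density argument based on $\psi(\lambda)=\sum_{j}\|\tilde{\mathbf{g}}_{j}(\lambda)\|^{2}_{\mathcal{N}}$. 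As it stands, however, your argument has a genuine gap already in Step 2: the justification ``$G(-i\gamma)\mathcal{K}\subset\mathrm{Ran}\,R(-i\gamma)$'' contradicts a standing hypothesis of Theorem \ref{kreinnext1}, since $\mathrm{Ran}\,R(-i\gamma)=\mathcal{D}(A)$ while $\mathcal{N}\cap\mathcal{D}(A)=\{0\}$ (in the Laplace example $G(z)\mathbf{e}_{n}$ is a Green function, in $\mathbf{L}_{2}(\mathbf{R}_{3})$ but not in $H_{2}^{2}(\mathbf{R}_{3})$). Consequently $(A+i\gamma)G(-i\gamma)$ is undefined, your $T(z)$ is not an operator in $\mathcal{H}$, the outer resolvents cannot be ``stripped off'' as claimed, and the unbounded factors $A\pm i\gamma$ are not a bookkeeping nuisance but the very obstruction singular perturbations create; $\mathfrak{F}_{\lambda}T(\lambda+i0)\mathfrak{F}_{\lambda}^{*}$ only makes sense after a form-level (rigged-space) reinterpretation, which is essentially what the paper's quadratic-form computation supplies.

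The second gap is Step 1 itself: the stationary representation you quote is not available off the shelf here, because the standard trace-class stationary theory is built on a factorization of the perturbation $V=A_{L}-A$, which does not exist for a singular perturbation, and its resolvent-difference variants need a limiting absorption principle for the sandwiched resolvent. Your hypotheses supply only a weak boundary value $\Gamma(\lambda+i0)$ with the bound (\ref{ess}); in your ``main obstacle'' paragraph you correctly list what must be added (a stronger limit for $\Gamma$, existence and a.e.\ boundedness of $\lambda\mapsto\mathfrak{F}_{\lambda}G(-i\gamma)$, interchange of the limit $z\to\lambda+i0$ with the fibre compression), but none of it is carried out, and the assertion that $\sum_{j,k}|b_{jk}|<\infty$ alone yields the required local smoothness is stated, not proved. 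The paper's proof is arranged precisely so that the weak limit plus (\ref{ess}) suffice: the $\varepsilon,\eta\downarrow 0$ limits are taken inside the quadratic form via the contractions $\pi_{\pm}^{(\varepsilon)}$, first for fibre data obeying (\ref{help3}), and then extended by the $\mathbf{f}_{\delta}=\mathbf{f}/\sqrt{1+\delta\psi}$ trick. Finally, the constants $4\pi$ and $\sqrt{2\pi}$ in (\ref{suboper1a}) are part of what has to be proved; declaring them ``pinned down by the normalisation of $\mathfrak{F}_{\lambda}$'' leaves the asserted identity unverified. So the proposal is a reasonable programme, but not yet a proof.
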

\begin{proof}

Notice, that the existence of strong limits in (\ref{wave1}) ensures the validity 
of relations 
\begin{equation}\label{wave2}\begin{array}{c}
		\mathfrak{W}_{\pm}\left(A_{1},A \right) =s-\underset{\varepsilon\downarrow 
			0}{\lim}\:\varepsilon\int\limits_{0}^{\infty}e^{-\varepsilon t}e^{\pm iA_{L}t}e^{\mp iAt} \\ 
		=s-\underset{\varepsilon\downarrow 0}{\lim}\:\varepsilon\int\limits_{0}^{\infty}e^{-\varepsilon t}e^{\pm iA_{L}t}\int 
		\limits_{-\infty}^{\infty}e^{\mp i\lambda t}dE_{\lambda} =s-\underset{\varepsilon\downarrow 0}{\lim}\:\pm i\varepsilon 
		\int \limits_{-\infty}^{\infty}R_{L}(\lambda \pm i\varepsilon)dE_{\lambda}.
	\end{array}
\end{equation}
By (\ref{wave2}) the quadratic form of $\mathfrak{S}(A_{L},A)$ for any $f_{1},f_{2}\, \in \mathcal{H}$ can be written as follows
  \begin{equation}\label{scat2}
  \begin{array}{c}
  \left(\mathfrak{S}(A_{L},A)f_{1},f_{2} \right)=\left(\mathfrak{W}_{-}\left(A_{L},A \right)f_{1},\mathfrak{W}_{+}\left(A_{L},A 
  \right)f_{2} \right) \\ =\underset{\varepsilon ,\eta\downarrow 
  0}{\lim}\:-\eta\varepsilon\iint\limits_{-\infty}^{\infty} \left(R_{L}(\lambda + i\varepsilon)dE_{\lambda}f_{1}, R_{L}(\mu 
  - i\eta)dE_{\mu}f_{2}h\right)  \\ = \underset{\varepsilon ,\eta\downarrow 
  0}{\lim}\:-\eta\varepsilon\iint\limits_{-\infty}^{\infty} \frac{1}{\lambda-\mu+i(\varepsilon-\eta)}\left(\left[ 
  R_{L}(\lambda + i\varepsilon)-R_{L}(\mu + i\eta)\right]dE_{\lambda} f_{1}, dE_{\mu}f_{2}\right).
  \end{array}
  \end{equation}
 To proceed to the limits in (\ref{scat2}) first we note that 
\begin{equation}\label{triv1}\begin{array}{c}
 \mp i\varepsilon\,R(\lambda \pm i\varepsilon)dE_{\lambda}f=f,\quad \varepsilon>0, \;f\in\mathcal{H},\\
 -\eta\varepsilon\iint\limits_{-\infty}^{\infty} \frac{1}{\lambda-\mu+i(\varepsilon-\eta)}dE_{\mu}\left[ R(\lambda + 
 i\varepsilon)-R(\mu + i\eta)\right]dE_{\lambda}f \\
=-\eta\varepsilon\iint\limits_{-\infty}^{\infty}dE_{\mu}R(\mu + i\eta)R(\lambda + 
i\varepsilon)dE_{\lambda}f=f.
\end{array}
\end{equation}
Then taking into account that by (\ref{res3}) for the fixed  $\gamma > 0$ the equality 
\begin{equation}\label{res3a}
G(z)=G(-i\gamma)+(z+i\gamma)R(z)G(-i\gamma)
\end{equation}
is true let us use Krein's formula (\ref{krein5}). Then for any $f_{1},f_{2}$ from the domain $\mathcal{D}_{A} $ we obtain the expression
\begin{equation}\label{scat4a}\begin{array}{c}
\left(\mathfrak{S}(A_{L},A)f_{1},f_{2} \right)=(f_{1},f_{2})-	\underset{\varepsilon ,\eta\downarrow 0}{\lim}\:\iint\limits_{-\infty}^{\infty} \frac{[\mu+i(\gamma+\eta)][\lambda-i(\gamma-\varepsilon)]}{\lambda-\mu+i(\varepsilon-\eta)} \\ \times \left(\left[\frac{i\eta}{\mu-\lambda-i\varepsilon}\Gamma(\lambda+i\varepsilon)-\frac{i\varepsilon}{\lambda-\mu-i\eta}\Gamma(\mu+i\eta) \right]G(-i\gamma)^{*}dE_{\lambda}f_{1},  G(-i\gamma)^{*}dE_{\mu}f_{2}\right) 
\end{array}
\end{equation}
But in accordance with (\ref{G1}) for any $f\in\mathcal{H}$ we have
\begin{equation}\label{G2}
	G(-i\gamma)^{*}f=\sum_{j\in\mathbb{Z}}\left(f,\tilde{g}_{j} \right)_{\mathcal{H}}\mathbf{h}_{j}, \quad \tilde{g}_{j}=\sum_{j\in\mathbb{Z}}b_{jk}g_{k} .
\end{equation}
Substituting (\ref{G2})  into (\ref{scat2}) and assuming that $f_{1},f_{2}\in E(\Delta)\mathcal{H}\cap\mathcal{D}_{A} $ yields
\begin{equation}\label{scat4b}\begin{array}{c}
\left(\mathfrak{S}(A_{L},A)f_{1},f_{2} \right)=\int_{\Delta}\left(\mathbf{f}_{1}(\lambda),\mathbf{f}_{1}(\lambda) \right)_{\mathcal{N}} d\lambda  -	\underset{\varepsilon,\eta\downarrow 0}{\lim}\iint_{\Delta\times\Delta}d\lambda d\mu \\ \times \frac{[\lambda-i(\gamma-\varepsilon)][\mu+i(\gamma-\eta)]}{\lambda-\mu+i(\varepsilon-\eta)}  \sum_{k,j\in \mathbb{Z}}\left[\frac{i\eta}{\mu-\lambda-i\varepsilon}\Gamma_{kj}(\lambda+i\varepsilon)-\frac{i\varepsilon}{\lambda-\mu-i\eta}\Gamma_{kj}(\mu+i\eta) \right] \\ \times \left(\mathbf{f}_{1}(\lambda),\mathbf{g}_{j}(\lambda) \right)_{\mathcal{N}}\cdot  \left(\mathbf{g}_{k}(\mu), \mathbf{f}_{2}(\mu)\right)_{\mathcal{N}}, \quad \Gamma_{kj}(z)=\left( \Gamma(z)\mathbf{h}_{j},\mathbf{h}_{k}\right) .
\end{array}	
\end{equation} 

To pass to the limit in (\ref{scat4b}), recall that in the Hilbert space $\mathbb{L}^{2}(\mathbf{R};\mathcal{M})$ of vector functions $\mathbf{f}(\lambda)$ on $\mathbf{R}$ with values in some Hilbert space $\mathcal{M}$ for any $\varepsilon >0$ operators $$\left(\pi_{\pm}^{(\varepsilon)}\mathbf{f} \right)(\lambda)=\pm \frac{1}{2\pi i}\int_{\mathbf{R}}\frac{1}{\mu-\lambda\mp i\varepsilon}\mathbf{f}(\mu)d\mu  $$ are contractions and strong limits $\pi_{\pm}$ of this operators as $\varepsilon\downarrow 0$  are orthogonal projectors  onto the Hardy subspace $\mathbf{H}^{2}_{+}(\mathcal{M})$ in the upper half-plane and its orthogonal complement $\mathbf{H}^{2}_{-}(\mathcal{M})$, respectively. In particular, for any $\mathbf{f}\in \mathbb{L}^{2}(\mathbf{R};\mathcal{M})$ we have
\begin{equation}\label{gen}\begin{array}{c}
\underset{\eta\downarrow 0}{\lim}\underset{\varepsilon\downarrow 0}{\lim}\frac{1}{2\pi i}\int_{\mathbf{R}}\left[ \frac{1}{\mu-\lambda- i\varepsilon}-\frac{1}{\mu-\lambda- i(\varepsilon-\eta)}\right] \mathbf{f}(\mu)d\mu \\	= \underset{\eta\downarrow 0}{\lim}\left[(\pi_{+}\mathbf{f})(\lambda) +\left(\pi_{-}^{(\eta)}\mathbf{f} \right)(\lambda)\right]=\mathbf{f}(\lambda). 
\end{array}
\end{equation}

Let us assume for a moment that the scalar functions $\left(\mathbf{f}_{s}(\lambda),\tilde{\mathbf{g}}_{j}(\lambda) \right)_{\mathcal{N}}, \, s=1,2,$ in (\ref{scat4b}) satisfy the condition
\begin{equation}\label{help3}
	\int_{\Delta}\sum_{j\in\mathbb{Z}}\left|\left(\mathbf{f}_{s}(\lambda),\tilde{\mathbf{g}}_{j}(\lambda) \right)_{\mathcal{N}} \right|^{2}d\lambda<\infty. 
\end{equation}
  In other words, let us assume that vector functions 
  $$\mathbf{f}_{s}(\lambda)=\sum_{j\in\mathbb{Z}}\left(\mathbf{f}_{s}(\lambda),\tilde{\mathbf{g}}_{j}(\lambda) \right)_{\mathcal{N}}\mathbf{h}_{j}$$
  belong to the space $\mathbb{L}^{2}(\Delta;\mathcal{K})$.	 
  In this case, using relations (\ref{gen}), one can directly carry out the passage to the limit in (\ref{scat4b}) and, as a result, obtain the expression
  \begin{equation}
  \begin{array}{c}
  	\left(\mathfrak{S}(A_{L},A)f_{1},f_{2} \right)=\int_{\Delta}\left(\mathbf{f}_{1}(\lambda),\mathbf{f}_{2}(\lambda) \right)_{\mathcal{N}} d\lambda \\ -	\int_{\Delta} \sum_{k,j\in \mathbb{Z}}2i \left( \Gamma(\lambda+i0)\mathbf{h}_{j},\mathbf{h}_{k}\right) _{\mathcal{K}} \left(\mathbf{f}_{1}(\lambda),\tilde{\mathbf{g}}_{j}(\lambda) \right)_{\mathcal{N}}\cdot  \left(\tilde{\mathbf{g}}_{k}(\lambda), \mathbf{f}_{2}(\lambda)\right)_{\mathcal{N}}d\lambda  .
  \end{array}
\end{equation}
It remains to verify that the set of vector functions from  $\mathbb{L}^{2}(\Delta;\mathcal{N})$ satisfying condition (\ref{help3}) is dense in  $\mathbb{L}^{2}(\Delta;\mathcal{N})$. To this end let us take the scalar function 
$$\psi(\lambda)=\sum_{j\in \mathbb{Z}}\left\|\tilde{\mathbf{g}}_{j}(\lambda) \right\|^{2}_{\mathcal{N}} .  $$  
According to the conditions of the theorem, this function is integrable since
$$\begin{array}{c}
\sum_{j\in \mathbb{Z}}\int_{\Delta}\left\|\tilde{\mathbf{g}}_{j}(\lambda) \right\|^{2}_{\mathcal{N}}d{\lambda} =\sum_{j\in \mathbb{Z}}\int_{\Delta}\sum_{k^{\prime},k\in\mathbb{Z}}\bar{b}_{jk^{\prime}}b_{jk}\left(\mathbf{g}_{k}(\lambda),\mathbf{g}_{k^{\prime}}(\lambda) \right)d\lambda \\ =\sum_{j,k\in\mathbb{Z}}\left|b_{jk} \right|^{2} \leq \left(\sum_{j,k\in\mathbb{Z}}\left|b_{jk} \right| \right)^{2}<\infty. \end{array}
$$
For any $\mathbf{f}\in\mathbb{L}^{2}(\Delta;\mathcal{N})$ and any $\delta>0$ put $$ \mathbf{f}_{\delta}=\frac{1}{\sqrt{1+\delta\psi(\lambda)}}\mathbf{f}(\lambda).$$ Then, on the one hand, $$\int_{\Delta}\sum_{j\in\mathbb{Z}}\left|\left(\mathbf{f}_{\delta}(\lambda),\tilde{\mathbf{g}}_{j}(\lambda) \right)_{\mathcal{N}} \right|^{2}d\lambda\leq \int_{\Delta}\frac{\psi(\lambda)}{1+\delta\psi(\lambda)}\left\|\mathbf{f}(\lambda) \right\|^{2}_{\mathcal{N}}d\lambda\leq\frac{1}{\delta}\left\|\mathbf{f} \right\|^{2}  <\infty ,$$ and on the other, it is obvious that $$\underset{\delta\downarrow 0}{\lim}\left\|\mathbf{f}-\mathbf{f}_{\delta} \right\| =0 $$	
\end{proof}
\begin{rem}[\cite{AdP1}]
By our assumption the scattering operator $\mathfrak{S}(A_{1},A)$ is unitary. Therefore the scattering matrix $S(A_{1},A)(\lambda)$ is to be  unitary almost everywhere on $\Delta$. Notice, that the fact that $$S(A_{1},A)(\lambda)^{*}S(A_{1},A)(\lambda)=I$$ follows directly from the relation
\begin{equation}\label{gramm}\begin{array}{c}
	\frac{1}{2i}\left(  \Gamma(\lambda+i0)^{-1}-\left[\Gamma(\lambda+i0)^{*}\right]^{-1}\right)_{j,k\in\mathbb{Z}} \\ =\frac{1}{2i}\left(  |L|^{-\frac{1}{2}}Q(\lambda+i0)|L|^{-\frac{1}{2}}-|L|^{-\frac{1}{2}}Q(\lambda+i0)^{*}|L|^{-\frac{1}{2}}\right)_{j,k\in\mathbb{Z}} \\ =\pi\left(\left(\tilde{\mathbf{g}}_{j}(\lambda),\tilde{\mathbf{g}}_{k}(\lambda) \right)_{\mathcal{N}} \right)_{j,k\in\mathbb{Z}}.
\end{array} 
\end{equation}
and general 
\begin{prop}\label{local2}
	Let $g_{1},...,g_{N}, \, N<\infty,$ be a set of linearly independent vectors of the Hilbert space $ \mathcal{N}$, $\Upsilon=(\gamma_{nm})_{1}^{N}$ is the corresponding Gram-Schmidt matrix and $\Lambda$ is any Hermitian $N\times N$ - matrix such that the matrix $\Lambda+i\Upsilon$ is invertible. Then the matrix 
	\begin{equation}\label{local1}
		\Omega=I-2i \sum\limits_{n,m=1}^{N}\left([\Lambda+i\Upsilon)]^{-1}\right)_{jk}(.\, ,\, g_{j})g_{k}
	\end{equation}
	is unitary.
\end{prop}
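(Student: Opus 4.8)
The plan is to recognize $\Omega$ as an identity-plus-finite-rank ``sandwich'' operator and to reduce its unitarity to a single finite-dimensional matrix identity. First I would introduce the linear map $T\colon\mathbb{C}^{N}\to\mathcal{N}$ determined by $T\mathbf{e}_{k}=g_{k}$. Linear independence of the $g_{k}$ makes $T$ injective; its adjoint is $T^{*}f=\bigl((f,g_{n})\bigr)_{n=1}^{N}$, and $T^{*}T$ is exactly the Gram matrix $\Upsilon$ of the system $g_{1},\dots,g_{N}$ (with the index convention $\gamma_{nm}=(g_{n},g_{m})$ built into (\ref{local1}); the other convention merely transposes $\Upsilon$ and $M$ simultaneously and changes nothing below). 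In these terms the operator (\ref{local1}) is
\[
\Omega=I-2i\,T\,M\,T^{*},\qquad M:=\bigl(\Lambda+i\Upsilon\bigr)^{-1}.
\]
Since $\Omega$ acts as the identity on $(\mathrm{ran}\,T)^{\perp}=\ker T^{*}$ and differs from $I$ only by a finite-rank operator, it is a bounded operator, and it will be unitary as soon as $\Omega^{*}\Omega=I$ and $\Omega\Omega^{*}=I$. (Note also that the hypothesis that $\Lambda+i\Upsilon$ be invertible is automatic: $\Upsilon>0$ because the $g_{k}$ are independent, so $\Lambda+i\Upsilon$ cannot annihilate any nonzero vector.)

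Next I would compute the adjoint, $\Omega^{*}=I+2i\,T\,M^{*}\,T^{*}$, and expand, using $T^{*}T=\Upsilon$:
\[
\Omega^{*}\Omega=I+2i\,T\bigl(M^{*}-M\bigr)T^{*}+4\,T\,M^{*}\,(T^{*}T)\,M\,T^{*}=I+T\Bigl[\,2i\bigl(M^{*}-M\bigr)+4\,M^{*}\Upsilon M\,\Bigr]T^{*}.
\]
So $\Omega^{*}\Omega=I$ reduces to the vanishing of the bracketed $N\times N$ matrix; multiplying it on the left by $(M^{*})^{-1}$ and on the right by $M^{-1}$, this vanishing is equivalent to the identity
\[
M^{-1}-\bigl(M^{*}\bigr)^{-1}=2i\,\Upsilon.
\]
But $M^{-1}=\Lambda+i\Upsilon$, and $\bigl(M^{*}\bigr)^{-1}=\bigl(M^{-1}\bigr)^{*}=\Lambda^{*}-i\Upsilon^{*}=\Lambda-i\Upsilon$ by Hermiticity of $\Lambda$ and $\Upsilon$, so the difference is indeed $2i\Upsilon$. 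The relation $\Omega\Omega^{*}=I$ is obtained the same way: the analogous bracket is $2i(M^{*}-M)+4M\Upsilon M^{*}$, and after multiplying by $M^{-1}$ and $(M^{*})^{-1}$ it reduces to the same identity. Hence $\Omega$ is unitary.

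Equivalently, one may phrase the whole computation around the elementary resolvent identity $\bigl(\Lambda-i\Upsilon\bigr)^{-1}-\bigl(\Lambda+i\Upsilon\bigr)^{-1}=2i\bigl(\Lambda-i\Upsilon\bigr)^{-1}\Upsilon\bigl(\Lambda+i\Upsilon\bigr)^{-1}=2i\bigl(\Lambda+i\Upsilon\bigr)^{-1}\Upsilon\bigl(\Lambda-i\Upsilon\bigr)^{-1}$, i.e.\ $M^{*}-M=2iM^{*}\Upsilon M=2iM\Upsilon M^{*}$; this is precisely the finite-dimensional avatar of the ``Gram relation'' (\ref{gramm}) that was used to see unitarity of the scattering matrix in Theorem \ref{suboper}. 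I do not expect a genuine obstacle here: the only thing requiring a little care is the passage from the operator identity on $\mathcal{N}$ to the finite matrix identity through the factorization by $T,T^{*}$ and the equality $T^{*}T=\Upsilon$, together with keeping the conjugation/transpose in the Gram matrix consistent with the ordering of the indices in (\ref{local1}); everything else is a one-line resolvent manipulation.
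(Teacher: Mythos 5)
Your proof is correct and amounts to exactly the ``direct calculation'' the paper invokes without writing out: factoring $\Omega=I-2iTMT^{*}$ with $T^{*}T=\Upsilon$ and reducing $\Omega^{*}\Omega=\Omega\Omega^{*}=I$ to $(\Lambda+i\Upsilon)-(\Lambda-i\Upsilon)=2i\Upsilon$ is the same verification, just organized cleanly, and it is the finite-dimensional counterpart of the relation (\ref{gramm}) as you note. Your side remark that invertibility of $\Lambda+i\Upsilon$ is automatic (since $\Upsilon>0$ for linearly independent $g_{k}$) is also correct.
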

The unitarity of the matrix $\Omega$ is verified by direct calculation. The extension of the statement of Proposition \ref{local2} to the case of infinite sequences of linearly independent vectors $\{g_{j}\}_{j\in\mathbb{Z}} $ from $\mathcal{N}$ under the additional conditions of boundedness and bounded invertibility of the operator in $\mathbf{l}_{2} $ generated by the infinite matrix $\Lambda+i\Upsilon$ is left to the reader.  
\end{rem} 

\section{Scattering matrices for perturbations of the Laplace operator by infinite sums of zero-range potentials}
The standardly defined Laplace operator $A=-\Delta$ in $\mathbf{L}_{2}(\mathbf{R}_{3})$ has uniform Lebesgue spectrum of infinite multiplicity, which coincides with the half-axis  $[0,\infty)$ of the complex plane. 

Let $\mathbf{S}_{2}$ be the unit sphere in $\mathbf{R}_{3}$. The unitary mapping 
\begin{equation}\label{spectr} \begin{array}{c}
(\mathfrak{F}f))(\lambda,\mathbf{n})=\frac{\sqrt[4]{\lambda}}
{\sqrt{2}(2\pi)^{\frac{3}{2}}}\int_{\mathbf{R}_{3}}
f(\mathbf{x})e^{-i\sqrt{\lambda}(\mathbf{n}\cdot\mathbf{x})}d\mathbf{x}, \\ f\in\mathbf{L}_{2}(\mathbf{R}_{3}), \, \mathbf{n}\in\mathbf{S}_{2},
\end{array}	
\end{equation} 
of $\mathbf{L}_{2}(\mathbf{R}_{3})$ onto the space of vector function $\mathbf{L}_{2}\left(\mathbf{R}_{+};\mathbf{L}_{2}(\mathbf{S}_{2}) \right) $ with values from the Hilbert space $\mathbf{L}_{2}(\mathbf{S}_{2})$ transforms the Laplace operator $A$ into the selfadjoint operator of multiplication by the independent variable $\lambda$.

Let us consider a sequence $\{\mathbf{x}_{m}\}_{m\in \mathbb{Z}_{+}}$  of different points of $\mathbf{R}_{3}$ that may have only finite number of accumulation points in compact domains of $\mathbf{R}_{3}$ and  a coresponding sequence $(\delta_{m})_{m\in \mathbb{Z}_{+}}>0$  of positive numbers 
$$
	\delta_{0}=|\mathbf{x}_{0}|; \; \delta_{m}=\underset{0\leq j\neq k \leq m}{\min}|\mathbf{x}_{j}-\mathbf{x}_{k}|,  \; m=1,2,\ldots . 
$$
Let $(w_{m})_{m\in \mathbb{Z}{+}}$ be a sequence of non-zero real numbers such that
$$
\sum_{m\in \mathbb{Z}_{+}}\frac{1}{\sqrt{|w_{m}|}}<\infty, \quad  \sum_{m\in \mathbb{Z}_{+}}\frac{1}{\sqrt{|w_{m}|}\delta_{m}}<\infty.
$$ 
By virtue of Theorem \ref{help1}, substituting the diagonal matrix $L=\left(w_{m}\delta_{mn} \right)_{mn\in\mathbb{Z}_{+}}$ generated by this sequence into Krein's formula (\ref{krein7}) for the Laplace operator yields the resolvent $R_{L}$ of  close singular perturbation $A_{L}$ of the Laplace operator. 

To obtain an explicit expression of the scattering matrix $S(A_{L},A)(\lambda)$ for the pair $A_{L},A$ we note that in this case $\mathbf{L}_{2}(\mathbf{S}_{2})$ and the mentioned above space $\mathfrak{l}_{2}$ may successfully act as the Hilbert space $\mathcal{H}$ and the auxiliary space $\mathcal{K}$, respectively in Krein's formula (\ref{krein7}) and the canonical basis $\mathbf{e}_{j}=(\delta_{jk} )_{j,k\in \mathbb{Z}} $ in $\mathfrak{l}_{2}$ may be engaged there as the orthonormal basis $\{\mathbf{h}_{j}\}$. 

If the set of carriers $\left(\mathbf{x}_{j}\right)_{1\leq j\leq N}$ of the singular perturbation is finite, i.e.  $0\leq N <\infty$, $\left (w_{j}\right)_{1\leq j\leq N}$ are any real numbers and $L_{N}=\left(w_{m}\delta_{mn} \right)_{m,n=1}^{N}$ is , then the absolutely continuous spectrum of the corresponding singular perturbation $A_{L_{N}}$ of $A$ fills the half-axis $(0,\infty)$ and the expression (\ref{suboper1a}) for the scattering matrix $S_{N}(\lambda), \, \lambda>0,$ for the pair $(A_{L_{N}})$  has form \cite{AGHH}
\begin{equation}\label{smN}\begin{array}{c}

S_{N}(\lambda;\mathbf{n},\mathbf{n}^{\prime})=\delta(\mathbf{n},\mathbf{n}^{\prime})-\frac{\sqrt{\lambda}}{8i\pi^{2}}\sum\limits_{ j,j^{\prime}=0}^{N} \Gamma_{j,j^{\prime}}(\lambda +i0) \\ \times e^{i\sqrt{\lambda}\mathbf{n}\cdot{\mathbf{x}_{j}}}\cdot e^{-i\sqrt{\lambda}\mathbf{n^{\prime}}
\cdot \mathbf{x}_{j^{\prime}}}.\end{array}	
\end{equation}
Setting
\begin{equation}
	\mathbf{q}_{j}(\mathbf{n})=\frac{\sqrt[4]{\lambda}}{4\pi}e^{i\sqrt{\lambda}(\mathbf{n}\cdot \mathbf{x}_{j})}
\end{equation}
we see that that the scattering matrix  $S_{N}(\lambda)$ in this case is an operator in $\mathbf{L}_{2}(\mathbf{S}_{2})$ of the form
\begin{equation}\label{smabN}
	S_{N}(\lambda)=I-2i\sum\limits_{ j,j^{\prime}=1}^{N}\left(\left[\mathfrak{Q}_{N}(\lambda)+i\mathfrak{G}_{N}(\lambda) \right]^{-1} \right)_{j,j^\prime}\left(\cdot ,\mathbf{q}_{j}(\cdot)  \right)\mathbf{q}_{j^{\prime}}(\mathbf{n}), 
\end{equation}
where $I$ is the unity operator in $\mathbf{L}_{2}(\mathbf{S}_{2})$, $\mathfrak{Q}_{N}(\lambda)$ is a Hermitian $N\times N $-matrix function and $\mathfrak{G}_{N}(\lambda)$ is the Gramm-Schmidt matrix for the set of vectors\newline $\{\mathbf{q}_{j}(\cdot)\}_{j=1}^{N}\subset \mathbf{L}_{2}(\mathbf{S}_{2})$, i.e.  $$\mathfrak{G}_{N}(\lambda)=\left(\left(\mathbf{q}_{j},\mathbf{q}_{j^{\prime}} \right)_{\mathbf{L}_{2}(\mathbf{S}_{2})} \right)_{j,j^{\prime}=1}^{N}. $$ 	

Note that according to (\ref{smabN})
\begin{equation}\label{smfcN}
	S_{N}(\lambda)\mathbf{q}_{s}=\sum\limits_{ j=0}^{N}\left(\left[\mathfrak{Q}_{N}(\lambda)-i\mathfrak{G}_{N}(\lambda) \right]\cdot \left[\mathfrak{Q}_{N}(\lambda)+i\mathfrak{G}_{N}(\lambda) \right]^{-1} \right)_{s,j}\mathbf{q}_{j}.
\end{equation}

Let us potentiate the perturbation $A_{L_{N}}$ by adding at extra point $\mathbf{x}_{N+1}$ one more point potential with some real parameter $w_{N+1}$. To compare the arising scattering matrix $S_{N+1}(\lambda)$ with the preceded $S_{N}(\lambda)$ remember the following proposition 

\textit{Let $\mathfrak{W}_{N}$ and $\mathfrak{W}_{N+1}$ be invertible $N\times N$ and $(N+1)\times(N+1)$  matrices respectively and the left upper $N\times N$ block of  $\mathfrak{W}_{N+1}$ coincides with $\mathfrak{W}_{N}$. Let $\{\mathfrak{W}\}_{N}^{-1}$ be the $(N+1)\times(N+1)$ matrix the upper left $N\times N$ block of which is the matrix $\mathfrak{W}_{N}^{-1}$  and other its entries are zeros and $\mathbf{e}_{N+1}$ be the $(N+1)\times 1$-matrix (column-vector) all the entries of which are zeros excepth the lowest one that equals $1$. Then} 
\begin{equation}\label{auxnew1}
\mathfrak{W}_{N}^{-1}=\mathfrak{W}_{N+1}^{-1}-\left[\mathbf{e}_{N+1}^{T} \mathfrak{W}_{N+1}^{-1}\mathbf{e}_{N+1}\right]^{-1}\mathfrak{W}_{N+1}^{-1}\mathbf{e}_{N+1}\mathbf{e}_{N+1}^{T}\mathfrak{W}_{N+1}^{-1} 
\end{equation}	
As follows in our case for the corresponding matrices in (\ref{smabN}) we have
\begin{equation}\begin{array}{c}
\left[\mathfrak{Q}_{N+1}(\lambda)+i\mathfrak{G}_{N+1}(\lambda)\right]^{-1}=\left[{\mathfrak{Q}}_{N}(\lambda)+
i\mathfrak{G}_{N}(\lambda) \right]^{-1} \\
+\left[\mathbf{e}_{N+1}^{T}\left[\mathfrak{Q}_{N+1}(\lambda)+i\mathfrak{G}_{N+1}(\lambda)\right]^{-1}\mathbf{e}_{N+1} \right]^{-1} \\ \times \left[\mathfrak{Q}_{N+1}(\lambda)+i\mathfrak{G}_{N+1}(\lambda)\right]^{-1}\mathbf{e}_{N+1}\mathbf{e}_{N+1}^{T}\left[\mathfrak{Q}_{N+1}(\lambda)+i\mathfrak{G}_{N+1}(\lambda)\right]^{-1}.
\end{array} 
\end{equation}
and setting further
\begin{equation}\begin{array}{c}
\left(\begin{array}{c} \xi_{1} \\ \dots \\ \xi_{N+1}  
\end{array}\right)=\left[\mathfrak{Q}_{N+1}(\lambda)+i\mathfrak{G}_{N+1}(\lambda)\right]^{-1}\mathbf{e}_{N+1}, \\
\mathfrak{f}_{N+1}(\mathbf{n})= \sum\limits_{j=1}^{N+1}\xi_{j}\mathbf{q}_{j}(\mathbf{n}),\\ \mathfrak{d}_{N+1}(\lambda)=\frac{\det\left[\mathfrak{Q}_{N+1}(\lambda)+i\mathfrak{G}_{N+1}(\lambda)\right]}{\det\left[\mathfrak{Q}_{N}(\lambda)+i\mathfrak{G}_{N}(\lambda)\right]}
\end{array}\label{not8}
\end{equation}
we conclude that
\begin{equation}\label{recs}
S_{N+1}(\lambda ;\mathbf{n},\mathbf{n}^{\prime})=S_{N}(\lambda;\mathbf{n},\mathbf{n}^{\prime}) -2i\mathfrak{d}_{N+1}(\lambda) \mathfrak{f}_{N+1}(\mathbf{n})\overline{\mathfrak{f}_{N+1}(\mathbf{n}^{\prime})}. 
\end{equation}

Note that by (\ref{not8}) 
\begin{equation}\label{not9}\begin{array}{c}
\xi_{j}=-\mathfrak{d}_{N+1}(\lambda)^{-1}\sum\limits_{k=1}^{N}
\left(\left[\mathfrak{Q}_{N}(\lambda)+i\mathfrak{G}_{N}(\lambda)\right]^{-1}\right)_{jk}g(z;\mathbf{x}_{k}-\mathbf{x}_{N+1}), \, j=1,...,N; \\ \xi_{N+1}=\mathfrak{d}_{N+1}(\lambda)^{-1}.	
\end{array}
\end{equation}
Note also that by virtue of (\ref{not8}), (\ref{not9})
\begin{equation}\begin{array}{c}\label{not10}
\sum\limits_{k=1}^{N}\xi_{j}\left(S_{N}^{*}(\lambda)\mathbf{q}_{j} \right)(\mathbf{n})=-\mathfrak{d}_{N+1}(\lambda)^{-1} \\ \times\sum\limits_{j,k=1}^{N}
\left(\left[\mathfrak{Q}_{N}(\lambda)-i\mathfrak{G}_{N}(\lambda)\right]^{-1}\right)_{jk}g(z;\mathbf{x}_{k}-\mathbf{x}_{N+1})
\mathbf{q}_{j}(\mathbf{n})
\end{array} 
\end{equation}
and by (\ref{smabN})
\begin{equation}\label{not11}\begin{array}{c}
S_{N}^{*}(\lambda)\mathbf{q}_{N+1}=\mathbf{q}_{N+1} \\ +2i\sum\limits_{ j,k=1}^{N}\left(\left[\mathfrak{Q}_{N}(\lambda)-i\mathfrak{G}_{N}(\lambda) \right]^{-1} \right)_{j,k}\left[ \mathrm{Im}g(z;\mathbf{x}_{k}-\mathbf{x}_{N+1})\right] \mathbf{q}_{j}(\mathbf{n}).	
\end{array}
\end{equation}	
It follows from (\ref{not9})-(\ref{not11}) that

\begin{equation}\label{not12 }\begin{array}{c}
\left( S_{N}^{*}(\lambda)\mathfrak{f}_{N+1}\right)(\mathbf{n})=
\mathfrak{d}_{N+1}(\lambda)^{-1}\mathbf{q}_{N+1}(\mathbf{n}) \\-\mathfrak{d}_{N+1}(\lambda)^{-1}\sum\limits_{j,k=1}^{N}
\left(\left[\mathfrak{Q}_{N}(\lambda)-i\mathfrak{G}_{N}(\lambda)\right]^{-1}\right)_{jk}\overline{g(z;\mathbf{x}_{k}-\mathbf{x}_{N+1})}
\mathbf{q}_{j}(\mathbf{n}).
\end{array}
\end{equation}

and by virtue of above relations we conclude that
\begin{equation}\begin{array}{c}
S_{N+1}(\lambda)=S_{N}(\lambda)\left[I-2i\mathfrak{d}_{N+1}(\lambda)^{-1}\left(\cdot,\mathbf{w}_{\lambda , N+1}\right)\mathcal{J}\mathbf{w}_{\lambda, N+1}\right], \\  \mathbf{w}_{\lambda , N+1}(\mathbf{n})=\mathbf{q}_{N+1}(\mathbf{n}) -\sum\limits_{j,k=1}^{N}
\left(\left[\mathfrak{Q}_{N}(\lambda)-i\mathfrak{G}_{N}(\lambda)\right]^{-1}\right)_{jk}\overline{g(z;\mathbf{x}_{k}-\mathbf{x}_{N+1})}
\mathbf{q}_{j}(\mathbf{n}), \\ \mathcal{J}\mathbf{w}(\mathbf{n})=\overline{\mathbf{w}(\mathbf{-n})}	
\end{array}
\end{equation}  
and hence
\begin{equation}\label{last}
\det S_{N+1}(\lambda)=\det S_{N}(\lambda)\left[1-2i\mathfrak{d}_{N+1}(\lambda)\left(\mathcal{J}\mathbf{w}_{\lambda, N+1}, \mathbf{w}_{\lambda, N+1}\right) \right].  
\end{equation}

\end{document}